\documentclass[journal]{new-aiaa}
\usepackage[utf8]{inputenc}
\usepackage{textcomp}

\usepackage{graphicx}
\usepackage{amsmath}
\usepackage[version=4]{mhchem}
\usepackage{siunitx}
\usepackage{longtable,tabularx}
\usepackage{float}
\usepackage{amsfonts}
\usepackage{comment}
\usepackage{placeins}

\usepackage{amsthm}
\usepackage{bm}
\usepackage{hyperref}
\usepackage{xcolor}
\usepackage{algorithmic}
\usepackage{textcomp}
\usepackage{caption}
\usepackage{subcaption}
\usepackage{cleveref}

\usepackage{tikz}
\usetikzlibrary{positioning}
\usetikzlibrary{shadows}

\DeclareMathOperator{\sgn}{sgn}

\DeclareMathOperator{\vol}{vol}

\DeclareMathOperator*{\argmin}{arg\,min}
\DeclareMathOperator{\diag}{diag}

\newcommand{\theset}[1]{\left\{ #1 \right\} }
\newcommand{\setst}{\;\vert\;}

\newcommand{\bx}{\mathbf{x}}
\newcommand{\by}{\mathbf{y}}
\newcommand{\bxi}[1][i]{\mathbf{x}_{#1}}
\newcommand{\bp}{\mathbf{p}}
\newcommand{\bc}{\mathbf{c}}
\newcommand{\bq}{\mathbf{q}}
\newcommand{\bu}{\mathbf{u}}
\newcommand{\bd}{\mathbf{d}}
\newcommand{\bPhi}{\mathbf{\Phi}}
\newcommand{\bzeta}{\bm{\zeta}}
\newcommand{\bsigma}{\bm{\sigma}}
\newcommand{\bolds}[1]{\mathbf{#1}}
\newcommand{\bpsi}{\bm{\psi}}
\newcommand{\bA}{\mathbf{A}}
\newcommand{\bB}{\mathbf{B}}

\newcommand{\bv}{\mathbf{v}}
\newcommand{\bF}{\mathbf{F}}

\newcommand{\ol}[1]{\overline{#1}}
\newcommand{\ul}[1]{\underline{#1}}

\newcommand{\R}{\mathbb{R}}

\newcommand{\Z}{\mathbb{Z}}
\newcommand{\calR}{\mathcal{R}}
\newcommand{\calU}{\mathcal{U}}
\newcommand{\calA}{\mathcal{A}}
\newcommand{\calB}{\mathcal{B}}

\newcommand{\calX}{\mathcal{X}}
\newcommand{\rd}{\mathop{}\!\mathrm{d}}%

\newcommand{\pder}[2][]{\dfrac{\partial#1}{\partial#2}}
\newcommand{\Lp}[1][p]{\mathcal{L}^{#1}}

\newcommand{\Lpnorm}[2][p]{\| #2 \|_{\mathcal{L}^{#1}}}

\newcommand{\deltae}{\delta_{\textrm{e}}}
\newcommand{\deltaeo}{\delta_{\textrm{e}_0}}
\newcommand{\deltat}{\delta_{\textrm{th}}}
\newcommand{\deltato}{\delta_{\textrm{th}_0}}

\newcommand{\reachsets}[1][T]{\mathcal{R}(#1)}

\newtheorem{proposition}{Proposition}
\crefname{proposition}{Proposition}{Propositions}
\Crefname{proposition}{Proposition}{Propositions}

\crefname{theorem}{Theorem}{Theorems}
\Crefname{theorem}{Theorem}{Theorems}

\newtheorem{lemma}{Lemma}
\crefname{lemma}{Lemma}{Lemmas} %
\Crefname{lemma}{Lemma}{Lemmas}

\crefname{example}{Example}{Examples}

\crefname{enumi}{}{}     %
\crefname{figure}{Fig.}{Figs.}

\newcolumntype{C}[1]{>{\hspace{0pt}\centering\arraybackslash}p{#1}}

\title{Reachability-Based Design Optimization for Aircraft Maneuverability}

\author{Steven Nguyen\footnote{Ph.D. Student, Department of Mechanical and Aerospace Engineering}, Nicholas Orndorff\footnote{PhD Candidate, Department of Mechanical and Aerospace Engineering, AIAA Student Member.}, Jorge Cort\'es\footnote{Professor, Department of Mechanical and Aerospace Engineering}, and Boris Kramer\footnote{Associate Professor, Department of Mechanical and Aerospace Engineering, and AIAA Senior Member.}}
\affil{University of California San Diego, La Jolla, CA, 92093}

\begin{document}

\maketitle

\begin{abstract}
This paper presents a method for incorporating control analysis into design optimization for highly-maneuverable aircraft.
By studying reachable sets for aircraft dynamics, we ensure that the optimizer will take the aircraft's controlled capabilities into account.
We compute reachable sets of linear dynamics for computational efficiency, and account for aircraft trim points to factor in asymmetric magnitude bounds on the input signals.
We demonstrate the proposed method in design optimization of a blended-wing-body aircraft.
Considering its wing half-span and center half-span as design variables, we optimize the aircraft based on its longitudinal dynamics' reachable sets to yield improvements in its controlled performance.
When designing a reference tracking controller, we find up to 30\% less tracking error for angle of attack of the optimized model's nonlinear dynamics.
\end{abstract}

\section*{Nomenclature}

{\renewcommand\arraystretch{1.0}
\noindent\begin{longtable*}{@{}l @{\quad=\quad} l@{}}
$\calX_0$                             & set of allowable initial conditions    \\
$\calU$                               & set of allowable control inputs \\
$\mathcal{D}$                         & set of design parameters \\
$f$                                   & design optimization objective function; $f:\mathcal{D} \to \R$   \\
$g_i$                                 & design optimization constraint functions, $i \in [1,k]$; $g_i:\mathcal{D} \to \R$ \\
$\bd$                                 & design parameters, $\bd \in \mathcal{D}$ \\
$\bA,\bB$                             & linear system matrices, $\bA \in \R^{n\times n}, \bB \in \R^{n\times m}$ \\
$\bx(t)$                              & state of dynamical system at time $t$, $\bx(t) \in \R^n$  \\
$\bu(t)$                              & input to dynamical system at time $t$, $\bu(t) \in \calU$  \\
$c$                                   & center half-span, m  \\
$w$                                   & wing half-span, m \\
$D$                                   & induced drag, N   \\
$L$                                   & lift, N    \\
$F_{\textrm{th}}$                     & thrust force, N    \\
$M$                                   & pitching moment, Nm    \\
$m$                                   & mass, kg\\
$J_y$                                 & pitching moment of inertia, N$\textrm{m}^2$ \\
$\Z_+$                                & set of natural numbers excluding zero \\
\multicolumn{2}{@{}l}{Subscripts}\\
$0$ & trimmed values \\
th & thrust \\
e	& elevator \\
\end{longtable*}}

\section{Introduction}

Traditional aircraft design separates the analyses of structural stability and controlled performance due to the daunting complexity of the design problem.
Whereas the forces and stresses an aircraft must withstand are well-studied and predictable, control-theoretic performance metrics could range from measuring an aircraft's capacity for disturbance rejection to its agility throughout the flight envelope.
Even when one aspect of the controlled performance is prioritized, designing a metric to quantify it often depends on assumptions about the model or controller architecture.
Given the flexibility of control-theoretic tools and the ease of software-level changes, the challenge of simultaneous control and design is often avoided by reserving control analysis for late stages of design, after the aircraft's physical features are established.
However, the design-control separation naturally comes with disadvantages and can lead to the design of less efficient systems~\cite{smith_OptimalMixPassive_1992} that are costly to resolve as the design pinballs between control and design teams.
Famously, NASA's Explorer 1 suffered from instabilities after launching due to unexpected interactions between its flexible structures and the control modes~\cite{deyst_SurveyStructuralFlexibility_1969}.
NASA also conducted a large study between numerous universities and aerospace companies to redesign the pointing control system of the Hubble telescope when it suffered problematic perturbations after launch~\cite{nurre_InitialPerformanceImprovements_1991,bukley_HubbleSpaceTelescope_1995}.
In the design of next-generation aircraft, control theory is critical to ensuring that new aircraft are more capable than their predecessors, and incorporating control-theoretic metrics into the early stages of design optimization could be paramount to unlocking performance gains in new aircraft.
Designing metrics that strike a balance between identifying key characteristics of flight behavior without restrictive assumptions on the control architecture is an open challenge whose answer varies greatly depending on the desired aircraft capabilities.

In this paper, we consider metrics based on reachability theory for control co-design of aircraft.
Leveraging controller-agnostic insights regarding the capabilities of aircraft from reachability theory, we propose optimization methods based on linear analysis that yield improvements in the controlled performance of the nonlinear dynamics. In Section~\ref{subsec:control_codesign}, we review the study of control co-design, which entails simultaneously solving problems of control and design of structures.
In Section~\ref{subsec:aircraft_design}, we survey the role that control theory has played historically in aircraft design.
In Section~\ref{subsec:bwb_review}, we review the history for blended-wing-body aircraft, a special type of aircraft on which we apply the proposed design optimization.
Lastly, in Section~\ref{subsec:reach_review}, we provide context on the study of reachability theory.

\subsection{Control Co-Design}\label{subsec:control_codesign}
The interconnection between control theory and the design of systems to be controlled have long been studied by engineers seeking to push the limits of performance.
Control co-design refers to the simultaneous consideration of the design of physical and control components, and has shown success in a number of applications~\cite{garcia-sanz_ControlCoDesignEngineering_2019}, such as design of vehicles~\cite{evangelousimos_ControlMotorcycleSteering_2006}, chemical reactors~\cite{luyben_AnalyzingInteractionDesign_1994,luyben_AnalyzingInteractionDesign_1994a}, and networked control systems~\cite{branicky_SchedulingFeedbackCodesign_2002}.
In structural engineering, the use of passive control to improve structural stability has a long history dating back to the introduction of tuned mass dampers~\cite{hermannfrahm_DeviceDampingVibrations_1911}, which form critical components of some of the world's tallest structures today~\cite{li_DynamicBehaviorTaipei_2011}.
Many studies of design with active control have shown that considering the effects of feedback can eliminate structural inefficiencies and reduce the effort required to stabilize structures~\cite{khot_MulticriteriaOptimizationDesign_1998}.
Similar results have been found for helicopter rotors designed simultaneously with their flight control systems~\cite{sahasrabudhe_IntegratedRotorFlightControl_1997} and flexible robotic arms designed for specific input-output responses~\cite{asada_ControlconfiguredFlexibleArm_1991}.
More broadly, the optimization of systems that connect multiple disciplines has been considered in the field of multidisciplinary design optimization (MDO) since the 1980s~\cite{khot_MulticriteriaOptimizationDesign_1998}.
Due to the coupling between structures, controls, and aerodynamics, aircraft systems have been popular subjects for MDO since its inception~\cite{livne_IntegratedMultidisciplinarySynthesis_1990,grossman_IntegratedAerodynamicStructural_1988,ashley_MakingThingsBestAeronautical_1982,grossman_IntegratedAerodynamicStructural_1988,grossman_IntegratedAerodynamicstructuralDesign_1990,wrenn_MultilevelDecompositionApproach_1988}.

\subsection{Control in Aircraft Design}\label{subsec:aircraft_design}
Despite the numerous investigations of control as a discipline within MDO for aicraft design, no standard modeling approach exists.
One idea is to parameterize a control scheme so that the design optimization chooses the best design simultaneously with the best controller -- this approach has been employed in MDO to optimize over airfoils while simultaneously optimizing over a predetermined control architecture~\cite{zeiler_IntegratedAeroservoelasticTailoring_1988,livne_IntegratedMultidisciplinarySynthesis_1990,khot_MulticriteriaOptimizationDesign_1998}.
However, this approach can prove restrictive by focusing on one specific control scheme.
Deviations from that predetermined control synthesis technique would invalidate the optimization analysis.
On the other hand, a controller-agnostic design metric grants flexibility when designing control, but can be challenging to formulate due to the complexities of different approaches to nonlinear control.
Leveraging improvements in computational power, controller-agnostic approaches have become popular in recent years in combination  with robust and optimal control techniques for aircraft design~\cite{cunis_IntegratingNonlinearControllability_2023,gupta_ControllabilityGramianControl_2020,bahiamonteiro_DesignMetricsLanding_2024}.
Specifically, linearized aircraft dynamics are considered in~\cite{gupta_ControllabilityGramianControl_2020}, where the eigenvalues of the reachability Gramian are used as a measure of the aircraft's controllability.
By identifying the eigenvalues associated with desirable or non-desirable modes, they choose design parameters that maximize and minimize those characteristics in the controlled aircraft dynamics.
Robustness metrics are considered in~\cite{bahiamonteiro_DesignMetricsLanding_2024}, where the Bode sensitivity integral is leveraged to capture the aircraft's capacity for disturbance rejection.
Although both of these metrics quantify desired performance characteristics, they assume that the aircraft will be linearly controlled.
In~\cite{cunis_IntegratingNonlinearControllability_2023}, nonlinear control design techniques are used by optimizing the aircraft according to a nonlinear optimal control problem that encodes a desired flight maneuver.
This approach precisely characterizes an aircraft's capabilites in performing desired maneuvers at the cost of increased computational expense and the knowledge of exact maneuvers to follow.
Building on these works in controller-agnostic control co-design, we propose using reachability theory to inform the design of highly-maneuverable aircraft.

\subsection{Blended-Wing-Body Aircraft}\label{subsec:bwb_review}
As a motivating platform, recent investigations into blended-wing-body (BWB) airplanes have exposed the importance of aircraft design techniques that can handle the complex coupling between different subsystems.
Blended-wing-body airplanes promise higher lift-to-drag ratios and lower empty weight compared to traditional tube-and-wing airplanes for equivalent missions.
This contributes to decreased fuel burn and lower operating costs on the order of 20-30\%~\cite{liebeck_DesignBlendedWing_2004}.
Boeing’s X48, later named BWB450, was one of the first BWB concepts that progressed from concept to sub-scale flight test, intending to compete against Airbus’s A380 with a purported 32\% reduction in fuel consumption for a 7750 nm route~\cite{larrimer2020beyond}.
However, these first experiments with BWB airplanes exposed serious challenges with control authority and control allocation, which were sometimes at odds with design choices which were made to optimize performance~\cite{larrimer2020beyond}.
This highlights the multidisciplinary nature of BWB aircraft, where the lack of distinct features (e.g., fuselage and wing) leads to complex and sometimes unintuitive tradeoffs in the design space, motivating the need for MDO with control-related metrics as a comprehensive tool for designing BWBs~\cite{wakayama_ChallengePromiseBlendedwingbody_1998}.
Aerodynamic shape optimization of the BWB450 and its variants has been performed~\cite{lyu_AerodynamicDesignOptimization_2014}, but these results consider only the most simple static stability constraints.
Recent commercial BWB variants~\cite{ahuja_ComparisonBlendedWing_2025} are targeting mid-size markets that compete with airplanes such as the Boeing 767-300 with around 200 passengers.
Many concepts are sized such that they can be multi-role, including tanker and cargo variants.

\subsection{Reachability Theory}\label{subsec:reach_review}

In control theory, the study of reachability considers the set of states that a dynamical system can reach starting from an initial set~\cite{bansal_HamiltonJacobiReachabilityBrief_2017}. The methods developed to compute reachable sets depend upon the type of system, constraints, and objective.
For linear systems, forward reachable sets are often computed by propagating the set of initial states through the dynamics~\cite{maler_ComputingReachableSets_2008,althoff_SetPropagationTechniques_2021}.
In particular, some classes of convex subsets of Euclidean space, such as ellipsoids and zonotopes, yield particularly efficient computation of reachable sets due to closure properties under linear transformations~\cite{girard_ReachabilityUncertainLinear_2005,kurzhanski_EllipsoidalTechniquesReachability_2000,kurzhanskiy_EllipsoidalTechniquesReachability_2007}.
Computational packages such as SpaceEx~\cite{frehse_SpaceExScalableVerification_2011} and CORA~\cite{althoff_IntroductionCORA2015_2015} are designed specifically to handle these computations.
For nonlinear systems, these methods become less efficient due to the lack of closure guarantees, but some extensions exist for propagating zonotopic sets~\cite{althoff_ZonotopeBundlesEfficient_2011}.
For the general case of nonlinear dynamical systems, the problem of reachability is instead typically framed through backwards reachable sets, which require solving the Hamilton-Jacobi partial differential equation and suffers from the curse of dimensionality~\cite{mitchell_ApplicationLevelSet_2002,tomlin_SynthesizingControllersNonlinear_1998, mitchell_TimedependentHamiltonJacobiFormulation_2005}.
Alternatively, sampling-based approaches use simulated trajectories of the system to approximate the reachable set and can be applied to systems with black-box models.
This approach to reachability has been studied for the nonlinear setting~\cite{lew_ConvexHullsReachable_2025,pmlr-v155-lew21a,pmlr-v168-lew22a}, modeled as a disturbance affecting the system, and in the linear setting~\cite{villegaspico_ReachabilityAnalysisLinear_2018} for the purpose of finding component-wise bounds on the system dynamics.
In the aerospace field, reachability has previously been studied for improving autonomous landing systems and air traffic control by accurately modeling for unsafe flight trajectories~\cite{bayen_AircraftAutolanderSafety_2007,teo_FlightDemonstrationProvably_2005,bayen_DifferentialGameFormulation_2003a}.
However, reachability theory has not yet been incorporated into the design of aircraft.

\subsection{Contributions}

We propose incorporating reachable sets of linearized dynamics in the design optimization of aircraft for high maneuverability.
We restrict our analysis to reachable sets of linear systems because the curse of dimensionality leads to prohibitively expensive computation for nonlinear reachability, and we develop the metrics to be computationally cheap for incorporation into MDO.
Whereas previous works use the controllability Gramian~\cite{gupta_ControllabilityGramianControl_2020}, which characterizes the reachable set under bounded energy inputs~\cite{kalman_ContributionsTheoryOptimal_1960}, we consider all possible trajectories under bounded magnitude inputs.
The proposed optimization procedure is depicted in Figure~\ref{fig:mdo-flowchart}.
At each iteration, the reachability module receives design variables for the aircraft from the optimization solver, generates the linearized model, and computes the reachable set of the system.
Then, properties of the reachable set, such as its volume and projections, guide the design optimization problem to improve the aircraft's maneuverability.

\begin{figure}[t]
    \centering
    \begin{tikzpicture}[
        node distance=1.8cm,
        every node/.style={font=\small},
        block/.style={
            draw,
            rectangle,
            rounded corners,
            align=center,
            minimum width=4.2cm,
            text width=3.5cm,
            minimum height=1.0cm,
            line width=0.8pt
        },
        bigblock/.style={
            draw,
            dashed,
            fill=orange!10,
            rectangle,
            rounded corners,
            inner sep=8pt,
            line width=1.2pt
        },
        arrow/.style={->, line width=1pt},
        innerblock/.style={
        block,
        draw=black!80,
        fill=white,
        drop shadow={opacity=0.3}
        },
    ]

    \node[block] (mdo) {MDO optimizer};

    \node[bigblock, right= of mdo,
          minimum width=6.5cm,
          minimum height=10.5cm] (analysis) {};

    \node[innerblock, anchor=north] (aero)
        at ([yshift=-0.8cm]analysis.north)
        {Aero Analysis};

    \node[innerblock, below=of aero] (lin)
        {Linearized Model $\dot{\bx} = \bA(\bd) \bx + \bB(\bd) \bu$};

    \node[innerblock, below=of lin] (reach)
        {Computation of Reachable Set, $\reachsets$};

    \node[innerblock, below=0.8cm of reach] (props)
        {Evaluation of Reachable Set Properties};

    \draw[arrow] (aero) -- node[anchor=west, text width=3cm] {Force and moment derivatives} (lin);
    \draw[arrow] (lin) -- node[anchor=west, text width=3 cm] {$\bA(\bd),\bB(\bd)$ matrices, trimmed parameters} (reach);
    \draw[arrow] (reach) -- (props);

    \draw[arrow] (mdo.north) |-
        node[anchor=east,pos=0.3, text width=2.6cm] {Design variables, $\bd$}
        (aero.west);

    \draw[arrow] (props.west) -|
        node[pos=0.75, anchor=east, text width=2.5cm]
        {Metrics $f(\reachsets)$, $g_i(\reachsets,\bd)$ based on reachability theory}
        (mdo.south);

    \end{tikzpicture}
    \caption{Proposed optimization procedure incorporating reachability-based metrics.}
    \label{fig:mdo-flowchart}
\end{figure}
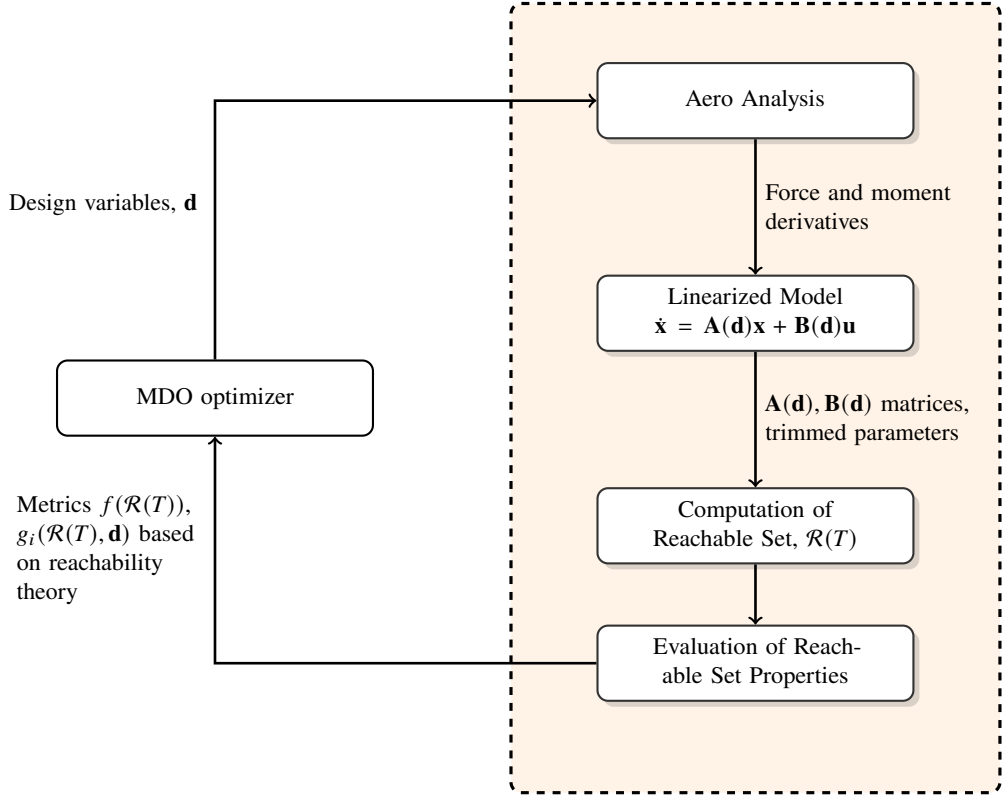

By analyzing reachable sets of linearized dynamics at various points in the flight envelope of an aircraft, we gain insight into the maneuverability of the aircraft in a \textit{controller-agnostic} manner.
We apply this analysis to the BWB aircraft outlined in Section~\ref{sec:bwb_aircraft},
using a panel method code to compute control and stability derivatives.
The optimization results produce linearized models with improved performance in disturbance rejection and reference tracking.
Furthermore, we find that the optimization problems also lead to improvements in the controlled performance for the nonlinear dynamics.

This work builds on our previous conference submission~\cite{NguyenCortesKramer_Reachability_ACC} by (i) extending the analysis to reachable sets under asymmetrically bounded magnitude inputs, (ii) solving design optimization problems for a more complex BWB geometry, and (iii) providing a deeper investigation of the improvements in the optimized aircraft designs, including the nonlinear setup.

\section{Problem Setting}

We consider the problem of mission-informed design optimization and propose to incorporate reachability analysis as a measure of the aircraft's maneuverability.
Inspired by the use of the reachability Gramian~\cite{gupta_ControllabilityGramianControl_2020} and  nonlinear optimal control~\cite{cunis_IntegratingNonlinearControllability_2023} for aircraft design, we propose to employ reachable sets of the linearized dynamics.
For the design optimization problem, we consider a design vector $\bd \in \mathcal{D} \subseteq \R^{n_d}$, describing the aircraft design, where $\mathcal{D}$ is the set of possible design choices.
We define a function $f:\mathcal{D} \to \R$ as the objective for maximization and constraints $g_1,\ldots,g_k:\mathcal{D} \to \R$, where evaluating $f$ or any $g_i$ may require reachability analysis.
Whereas reachability Gramian-based metrics consider the reachable sets of linear systems under bounded-energy inputs and optimal control problems consider specific maneuvers, we propose to consider the reachable sets of linearized dynamics under bounded-magnitude inputs, which capture the entire set of states that can be attained by a linear system subject to actuation limits.

In Section~\ref{subsec:reach_set_prelims}, we cover preliminary results and define the reachable sets we are interested in computing.
In Section~\ref{subsec:reach_set_metrics}, we introduce metrics based on reachable sets for use in design optimization problems.

\subsection{Reachable Sets of Linearized Dynamical Systems}\label{subsec:reach_set_prelims}

We consider parameterized linear dynamics of the form
\begin{equation}
    \dot{\bx}(t) = \bA(\bd)\bx(t) + \bB(\bd)\bu(t), \label{eq:basic_linear_system}
\end{equation}
where $\bx(t) \in \R^n$ is the state, $\bu(t) \in \calU(\bd) \subseteq \R^m$ the input, and $\bA(\bd)$, $\bB(\bd)$ have appropriate dimensions.
The dependency of the input set $\calU(\bd)$ and the matrices $\bA(\bd),\bB(\bd)$ on $\bd$ highlights the fact that the dynamics depend on the design parameters.
We omit this dependency when clear.
We consider initial conditions starting in a set $\calX_0 \subseteq \R^n$.
We denote by $\bx(t;\bxi[0],t_0,\bu(\cdot),\bd)$ the solution at time $t$ of~\eqref{eq:basic_linear_system} starting from $\bx(t_0) = \bxi[0] \in \calX_0$, driven by control signal $\bu(\cdot):[t_0,T] \to \calU(\bd)$.
The initial condition, initial time, control signal, and design parameters are omitted when clear, in which case we use $\bx(t)$ instead.
Fixing the terminal time $T > t_0$, we define the reachable set at time $t \in [t_0,T]$ as:
\begin{equation}
    \calR(t;\calX_0,\calU(\bd),\bd) = \theset{\bx(t;\bxi[0],t_0,\bu(\cdot),\bd) \setst \bxi[0] \in \calX_0,\, \bu(\tau) \in \calU(\bd) \, \forall \tau \in [t_0,t]}.
\end{equation}
Moving forward, we consider reachable sets with $\calX_0 = \theset{\bolds{0}}$.
We note that there is no loss of generality here because nonzero initial conditions simply shift the reachable set of a linear system by the vector $e^{\bA T}\bx(t_0)$.
Since we are interested in reachable sets of aircraft, we model their capabilities under actuator saturation by considering bounded magnitude inputs and set $\calU(\bd) = \theset{\bu \in \R^m \setst \ul{\bu}(\bd) \leq \bu \leq \ol{\bu}(\bd)}$, where $\ul{\bu}(\bd),\ol{\bu}(\bd) : \mathcal{D} \to \R^m$ and the inequalities are applied component-wise.
Note that the case where $\ul{\bu} = -\ol{\bu}$ and $\ol{\bu}_i = C$ for all $i = 1,\ldots,m$ and a constant $C \in \R_{\geq0}$ corresponds to input signals with bounded $\Lp[\infty]$ norms, where $\Lpnorm[\infty]{\bu(\cdot)} = \max\limits_{t \in [t_0,T],1 \leq i \leq m } |u_i(t)|$.
In this case, any input $\bu(\cdot):[t_0,T] \to \calU$ clearly satisfies $\Lpnorm[\infty]{\bu(\cdot)} \leq \ol{\bu}$.
For the remainder of the paper, when it is clear what the initial states, admissible inputs, and design variables are, we simply refer to the reachable set at time $t$ as $\reachsets[t]$.
The following result characterizes the control signals that determine the exposed points of these reachable sets.

\begin{lemma}\label{lem:saturated_controls_on_boundary}
    Consider the linear system~\eqref{eq:basic_linear_system} associated with a fixed design parameter $\bd$.
    Let $\calU = \theset{\bu \in \R^m \setst \ul{\bu} \leq \bu \leq \ol{\bu}}$ where $\ul{\bu},\ol{\bu} \in \R^m$ are constants.
    Then, for any exposed point $\bp \in \reachsets$, there exists a vector $\bc$ such that the control $\bu^*(t)$, with components defined as
    \begin{equation}\label{eq:fully_saturated_control}
        u_i^*(t) = \left\{ \begin{matrix}
        \ol{u}_i, & \sgn(\psi_i(t;\bc)) \geq 0 \\
        \ul{u}_i, & \sgn(\psi_i(t;\bc)) < 0,
        \end{matrix} \right.
    \end{equation}
    where $\bpsi(t;\bc) = \bc^\top e^{\bA (T-t)}\bB \in \R^m$, drives the system from $\bx(t_0) = \bolds{0}$ to $\bx(T) = \bp$.
\end{lemma}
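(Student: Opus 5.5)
Since $\bx(t_0)=\bolds{0}$, the variation-of-constants formula gives
\begin{equation*}
\bx(T)=\int_{t_0}^{T} e^{\bA(T-t)}\bB\,\bu(t)\rd t .
\end{equation*}
So $\reachsets$ is the image of the admissible control set under a linear map, and it is convex because $\calU$ is convex. The plan is to use the definition of an exposed point: since $\bp$ is exposed, there is a vector $\bc$ such that $\bp$ is the \emph{unique} maximizer of $\bc^\top\bx$ over $\reachsets$. We then show that the bang-bang control $\bu^*$ built from this same $\bc$ also maximizes $\bc^\top\bx(T)$, so uniqueness forces $\bx(T;\bolds{0},t_0,\bu^*(\cdot))=\bp$.

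The key computation is
\begin{equation*}
\bc^\top\bx(T)=\int_{t_0}^{T}\bpsi(t;\bc)\,\bu(t)\rd t=\int_{t_0}^{T}\sum_{i=1}^m \psi_i(t;\bc)\,u_i(t)\rd t ,
\end{equation*}
which is linear in $\bu$ and separable both in time and across components. For each fixed $t$ and $i$, maximizing $\psi_i(t;\bc)u_i$ over $u_i\in[\ul{u}_i,\ol{u}_i]$ means choosing $\ol{u}_i$ when $\psi_i\ge 0$ and $\ul{u}_i$ when $\psi_i<0$. At points where $\psi_i=0$ any value works, so the tie-breaking rule in~\eqref{eq:fully_saturated_control} is harmless. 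Hence $\psi_i(t;\bc)u_i(t)\le\psi_i(t;\bc)u_i^*(t)$ pointwise for every admissible $\bu$. Integrating shows $\bc^\top\bx(T;\bu)\le\bc^\top\bx(T;\bu^*)$ for all admissible $\bu$.

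We also need $\bu^*$ to be admissible, that is, measurable with values in $\calU$. Each $\psi_i(\cdot;\bc)$ is a real-analytic function of $t$, so $\{t:\psi_i\ge 0\}$ is measurable, in fact a finite union of intervals plus possibly the whole interval. Therefore $\bu^*$ is piecewise constant and valid. It follows that $\bx(T;\bu^*)\in\reachsets$ and it attains the maximum of $\bc^\top\bx$ over $\reachsets$. By uniqueness of the maximizer at the exposed point, $\bx(T;\bu^*)=\bp$.

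The main subtlety is recognizing that the exposed-point hypothesis supplies uniqueness of the maximizer. For a general supporting (non-exposed) boundary point, $\bu^*$ would only reach some point of the supporting face, not necessarily $\bp$. A second point to state carefully is that the supremum of $\bc^\top\bx$ over $\reachsets$ is attained and equals $\bc^\top\bp$. This follows directly from $\bp\in\reachsets$ being the maximizer, so no compactness argument is needed.
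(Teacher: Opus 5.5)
Your proof is correct and follows the same route as the paper: you take $\bc$ to be the exposing vector, use the pointwise bound $\psi_i(t;\bc)u_i(t)\le\psi_i(t;\bc)u_i^*(t)$ to show that $\bu^*$ maximizes $\bc^\top\bx(T)$, and then invoke uniqueness of the maximizer at an exposed point. The only differences are that you skip the paper's preliminary argument that each $\psi_i(\cdot;\bc)$ vanishes only on a null set, which is unnecessary here because the $\geq$ tie-breaking rule already makes $\bu^*$ a pointwise maximizer, and that you add the measurability check via analyticity, which the paper leaves implicit.
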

\begin{proof}
Note that by definition of exposed points for convex sets, there exists a vector $\bzeta \in \R^n$ such that $\bzeta^\top \bp > \bzeta^\top \bq, \, \forall \bq \in \reachsets \backslash \theset{\bp}$.
Setting $\bc = \bzeta$, we first show that the components of $\bpsi(t;\bc)$ are zero only on a set of measure zero.
Without loss of generality, consider the single input case (so $\psi(t;\bc)$ is a scalar and $\bB$ is a column vector) and suppose there exists an interval $[t_1,t_2] \subset [t_0,T]$ such that $\psi(t;\bc) = 0\, \forall t \in [t_1,t_2]$.
Consider the two controllers
\begin{equation}
    u^{(1)}(t) = \left\{\begin{matrix}
    u^*(t), & t \in [t_0,T] \backslash [t_1,t_2] \\
    0, & t \in [t_1,t_2]
    \end{matrix} \right.   \qquad u^{(2)}(t) = \left\{\begin{matrix}
        u^*(t), & t \in [t_0,T] \backslash [t_1,t_2] \\
        \tilde{u}(t), & t \in [t_1,t_2],
        \end{matrix} \right.
\end{equation} where $\tilde{u}(t):[t_1,t_2] \to \calU$ is an arbitrary function.
Then, $\bx^{(1)} = \bx(T;u^{(1)})$ and $\bx^{(2)} = \bx(T;u^{(2)})$ satisfy $\bc^\top \bx^{(1)} = \bc^\top \bx^{(2)}$.
To see this, recall the solution of LTI systems:
\begin{align}
\bx^{(1)} &= \int\limits_{t_0}^{t_1} e^{\bA(T-t)}\bB u^{(1)}(t) \rd t + \int\limits_{t_1}^{t_2} 0 \rd t + \int\limits_{t_2}^T e^{\bA(T-t)}\bB u^{(1)}(t) \rd t \\
\bx^{(2)} &= \int\limits_{t_0}^{t_1} e^{\bA(T-t)}\bB u^{(2)}(t) \rd t + \int\limits_{t_1}^{t_2} e^{\bA(T-t)}\bB \tilde{u}(t) \rd t + \int\limits_{t_2}^T e^{\bA(T-t)}\bB u^{(2)}(t) \rd t
\end{align}
Due to the assumption that $\psi(t;\bc) = \bc^\top e^{\bA(T-t)}\bB = 0$ on $[t_1,t_2]$, we find that
\begin{equation}
    \bc^\top \bx^{(1)} = \int\limits_{t_0}^{t_1} \bc^\top e^{\bA(T-t)}\bB u^{(1)}(t)\rd t + \int\limits_{t_2}^T \bc^\top e^{\bA(T-t)}\bB u^{(1)}(t)\rd t = \bc^\top \bx^{(2)}.
\end{equation}
However, $\bx^{(1)} \neq \bx^{(2)}$.
To see this, assume $\bx^{(1)} = \bx^{(2)}$.
This assumption implies that
\begin{equation*}
    \int\limits_{t_1}^{t_2} e^{\bA(T-t)}\bB \tilde{u}(t) \rd t = 0
\end{equation*} for any arbitrary $\tilde{u}(t)$.
This can only be the case if $\bB$ is in the null space of $e^{\bA(T-t)}$ for all $t \in [t_1,t_2]$.
However, the matrix exponential is always full rank, so by contradiction, we have that $\bx^{(1)} \neq \bx^{(2)}$.
This further contradicts the assumption that there was a unique point $\bp$ maximizing $\bc^\top \bp$, so by contradiction, we also have that $\psi(t;\bc)$ cannot be zero on a set of nonzero measure.
Finally, to see that $\bu^*(t)$ steers the system to a state that maximizes the linear functional of $\bc$, note that for any arbitrary controller $\bu(t)$,
\begin{align*}
\bc^\top \bx(T; \bu(t)) &= \int\limits_{t_0}^T \bc^\top e^{\bA(T-t)}\bB \bu(t) \rd t \\
&= \int\limits_{t_0}^T \sum\limits_{i=1}^m \psi_i(t) u_i(t) \rd t.
\end{align*}
Clearly, $\psi_i(t) u_i(t) \leq \psi_i(t) u^*_i(t),\, \forall t \in [t_0,T]$, and $\bp = \bx(T;\bu^*)$.
\end{proof}

Lemma~\ref{lem:saturated_controls_on_boundary} extends the results of~\cite{pecsvaradi_ReachableSetsLinear_1971} to the case where $\ol{\bu} \neq -\ul{\bu}$ and applies~\cite[Thm. 9]{pontryagin_MathematicalTheoryOptimal_2018} to the computation of reachable sets.
Lemma~\ref{lem:saturated_controls_on_boundary} clarifies that the set of exposed points of $\calR(t)$ is composed of states driven by bang-bang controls, which are control signals that are saturated at all times.
In combination with the standard result that reachable sets of linear systems with convex initial sets and control sets are themselves convex, Lemma~\ref{lem:saturated_controls_on_boundary} allows us to approximate $\calR(T)$ using the convex hull of its exposed points.
To do this, we sample $k$ values $\theset{\bc^{(1)}, \bc^{(2)}, \ldots, \bc^{(k)}}\subseteq \R^n $ for a $k \in \Z_+$.
Then, each $\bc^{(i)}$ corresponds to a controller $\bu^{(i)}(\cdot):[t_0,T] \to \calU$ that optimally steers the system from the origin at time $t_0$ to a point $\bx^{(i)}$ at time $T$.
By definition, each point $\bx^{(i)} \in \reachsets$.
Finally, we take the convex hull of the set $\theset{\bx^{(1)},\bx^{(2)},\ldots,\bx^{(k)}} \subseteq \R^n$ as our approximation of the reachable set $\reachsets$.
This approach is similar to the reachable set computation method proposed in~\cite{lew_ConvexHullsReachable_2025} for the case of linear systems, but does not require assuming full control authority.
It is also similar to the method proposed in~\cite{villegaspico_ReachabilityAnalysisLinear_2018}, but extends beyond computing component-wise state bounds to approximating the reachable set.

\subsection{Reachable Set Metrics}\label{subsec:reach_set_metrics}
Once the reachable set of the linearized dynamics is available, one can consider various metrics  to inform design.
One candidate metric is the volume of the reachable set, which gives a holistic view of the maneuverability of the aircraft.
Placing no emphasis on maneuverability in any axis, optimizing for the volume of the reachable set simply enlarges the set of states that can be reached in finite time.
We compute that volume using Scipy's implementation of QHull~\cite{barber_QuickhullAlgorithmConvex_1996}, which uses Delaunay triangulation.

Another metric we consider is the extent of the reachable set in an axis of interest, which improves how quickly the aircraft can move along that axis.
For example, maximizing the set along the direction of $ [0, \ 1, \ 0, \ 0]^\top$ ensures that the aircraft can achieve a greater change in its angle of attack for the same input signal.
For a unit vector $\bv$ along the chosen axis, we measure this by taking the Euclidean length of the orthogonal projection of the reachable set onto $\bv$.
We consider these metrics
as both objective functions to maximize and constraints to enforce in Section~\ref{sec:optimization_problem_overview}.
Although we do not consider it in this paper, another candidate metric may be to check for the inclusion of specific points or trajectories in the reachable set, as done in~\cite{cunis_IntegratingNonlinearControllability_2023}.

\begin{figure}[ht]
    \centering
    \includegraphics[width=0.52\linewidth]{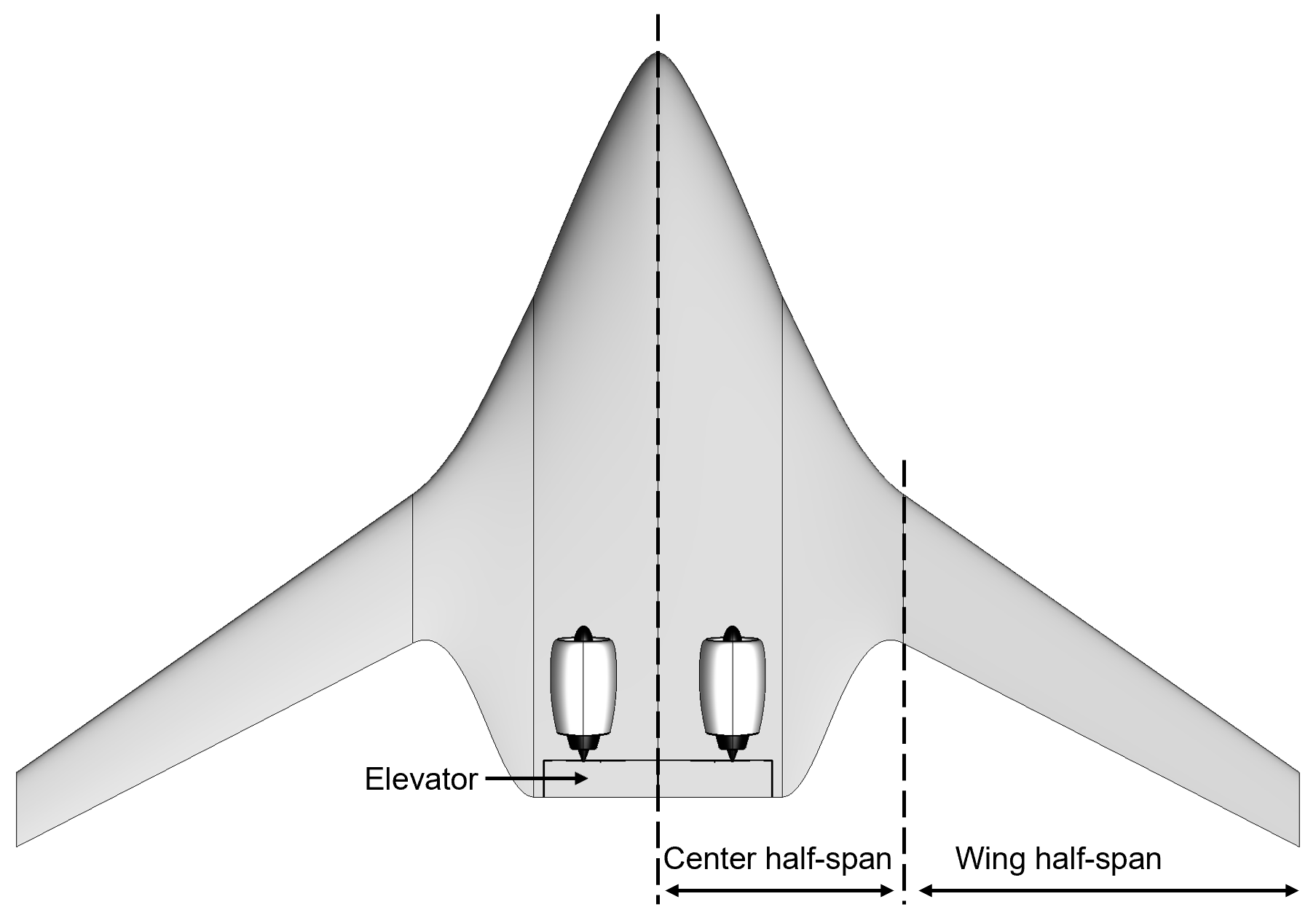}
    \caption{Blended-wing-body airplane concept used herein.
    The elevator control surface is located at the rear of the BWB, and we consider its center half-span $c$ and wing half-span $w$ as design variables in the optimization problems herein.}
    \label{fig:bwb_diagram}
\end{figure}

\section{Blended-Wing-Body Aircraft}\label{sec:bwb_aircraft}
We consider a notional BWB that is sized similarly to the smaller 747 variants with approximately 160,000 lb payload capacity and a maximum takeoff weight of 350,000 lb.
The airplane is powered by two GE CF6-80A turbofans, each of which is assumed to provide 48,000 lb maximum thrust.
We only consider longitudinal control, which is provided by distinct elevator control surfaces at the rear of the center-body section, shown in Figure~\ref{fig:bwb_diagram}.
The elevator surfaces deflect simultaneously, acting effectively as a single control surface.
This provides greater longitudinal control authority, and allows the airplane to trim without requiring significant washout in the outer wing sections.
The elevator surfaces are deflected smoothly during simulation using free-form deformations, where a simplified bounding volume maps a commanded elevator deflection directly to the B-spline coefficients of the BWB center body.
The BWB’s outer mold line geometry is designed in NASA’s OpenVSP software~\cite{mcdonald2022open}. %

In Section~\ref{subsec:linear_model}, we provide the longitudinal equations of motion used for modeling and reachable set computations.
In Section~\ref{subsec:aero_data_collection}, we describe an interpolatory approach for querying the panel method results during optimization.

\subsection{Linearized Longitudinal Model}\label{subsec:linear_model}

Using airspeed $V$ in meters per second, angle of attack $\alpha$ in radians, pitch rate $Q$ in radians per second, and pitch angle $\theta$ in radians as states, we consider the longitudinal model from~\cite{stevens_AircraftControlSimulation_2015}:
\begin{align} \nonumber
    \dot{V}(t) &= \frac{F_{\textrm{th}}}{m} \cos(\alpha(t)) -\frac{D}{m}-g \sin(\theta(t)-\alpha(t)) \\ \label{eq:nonlinear_longitudinal_model}
    \dot{\alpha}(t) &= -\frac{F_{\textrm{th}}}{m V(t)} \sin(\alpha(t)) - \frac{L}{m V(t)} + \frac{g}{V(t)} \cos(\theta(t)-\alpha(t)) + Q(t) \\ \nonumber
    \dot{Q}(t) &= \frac{M}{J_y} \\ \nonumber
    \dot{\theta}(t) &= Q(t),
\end{align}
where the dependence of the scalars $F_{\textrm{th}},D,L$, and $M$ on the aircraft flight conditions are omitted.

We formulate the reachability analysis for linearized aircraft dynamics about the trim point, $[V_0,\ \alpha_0,\ Q_0,\ \theta_0]^\top$, with trimmed thrust and elevator inputs $[\deltato,\ \deltaeo]^\top$~\cite{stevens_AircraftControlSimulation_2015}\cite{lavretsky_RobustAdaptiveControl_2024}:
\begin{equation}\label{eq:lavret_wise_linear_long_model}
\begin{bmatrix}
\dot{V}(t) \\
\dot{\alpha}(t) \\
\dot{Q}(t) \\
\dot{\theta}(t)
\end{bmatrix}
=
\begin{bmatrix}
X_V & X_\alpha & 0 & -g \cos \gamma_0 \\
\frac{Z_V}{V_0} & \frac{Z_\alpha}{V_0} & 1 + \frac{Z_Q}{V_0} & -\frac{g \sin \gamma_0}{V_0} \\
M_V & M_\alpha & M_Q & 0 \\
0 & 0 & 1 & 0
\end{bmatrix}
\begin{bmatrix}
V(t) \\
\alpha(t) \\
Q(t) \\
\theta(t)
\end{bmatrix} +
\begin{bmatrix}
X_{\deltat} \cos \alpha_0 & X_{\deltae} \\
- X_{\deltat} \sin \alpha_0 & \frac{Z_{\deltae}}{V_0} \\
M_{\deltat} & M_{\deltae} \\
0 & 0
\end{bmatrix}
\begin{bmatrix}
\deltat(t) \\
\deltae(t)
\end{bmatrix},
\end{equation}
where $\delta_\textrm{th} \in [0,\ 1]$ is the linearized thrust throttle and $\deltae \in [-0.523,\ 0.523]$ rad is the linearized elevator rotation.
All other terms with subscripts are stability derivatives, as discussed in~\cite{stevens_AircraftControlSimulation_2015}, where $X,Z$, and $M$ refer to the $x$-axis forces, $z$-axis forces, and pitching moment (in the wind frame of reference).
Since linearized models are based on perturbations of the nonlinear model, the trimmed inputs are also considered when computing input bounds.

The aircraft is linearized around operating conditions of $V_0 = 200$ m/s, $Q_0 = 0$ rad/s, and a flight path angle of $\gamma_0 = \alpha_0 - \theta_0 = 0$ rad.
During design optimization, as the BWB geometry changes, we recompute the trimmed values of $\alpha_0,\, \theta_0,\, \deltato,\, \deltaeo$.
The values of the aerodynamic force and moment derivatives are computed using a panel method code, as discussed in Section~\ref{subsec:aero_data_collection}.
Due to constraints in the solver, derivatives with respect to the pitch rate are not computed, so $Z_Q=M_Q = 0$.

\subsection{Aerodynamic Data Collection}\label{subsec:aero_data_collection}
Aerodynamic modeling and initial MDO results for this BWB concept can be found in~\cite{scotzniovsky2025fast}, which use a fully-differentiable panel method to perform a multi-point optimization consisting of several cruise conditions, climb, and structural sizing.
Rather than evaluating the panel method code at every iteration of the optimization problem, the aerodynamic data for the BWB is pre-computed offline and interpolated during optimization.
We collect aerodynamic data sweeping over five input parameters.
These inputs sweep over airspeeds $V \in [100,\ 295]$ m/s, angle of attack $\alpha \in [-0.0873,\ 0.2618]$ rad, center half-span $c \in [3,\  7]$ m, wing half-span $w \in [10,\ 20]$ m, and elevator rotation $\deltae \in [-0.523,\ 0.523]$ rad.
The outputs consist of longitudinal forces in $x$- and $z$-axes, pitching moment, lift, drag, and all aerodynamic derivatives required for the linearized dynamics (except for $Z_Q$ and $M_Q$)~\eqref{eq:lavret_wise_linear_long_model}.
The offline computation is motivated by the fact that the derivatives are expensive to compute.
Each simulation took approximately 20 seconds of wall-clock time on a 10-core M1 Pro MacBook Pro laptop,
so with five swept parameters, collecting data with a resolution of six in each direction requires approximately 87 hours of wall-clock time.
    
An example of the aerodynamic data is shown in Figure~\ref{fig:aero_sample}, where variation of the aerodynamic force in the $x$-direction, $F_x$, is shown for sweeps across the aircraft pitch and elevator angles.
The $x$-axis is taken parallel to the wind in this data.
A clear trend can be observed in Figure~\ref{fig:aero_sample}, where the magnitude of $F_x$ increases as the aircraft pitches up and rotates the elevator surface down, and is minimized when both pitch and elevator angle are approximately zero.
This is intuitive because drag forces will increase as the aircraft maximizes its surface area perpendicular to the wind and decrease when the aircraft faces the wind head-on.
A visualization of the BWB geometry with the forces acting on it from the panel method code is shown in Figure~\ref{fig:BWB_forces}.

\begin{figure}
\centering
\begin{subfigure}[t]{0.49\textwidth}
    \centering
    \includegraphics[width=\textwidth]{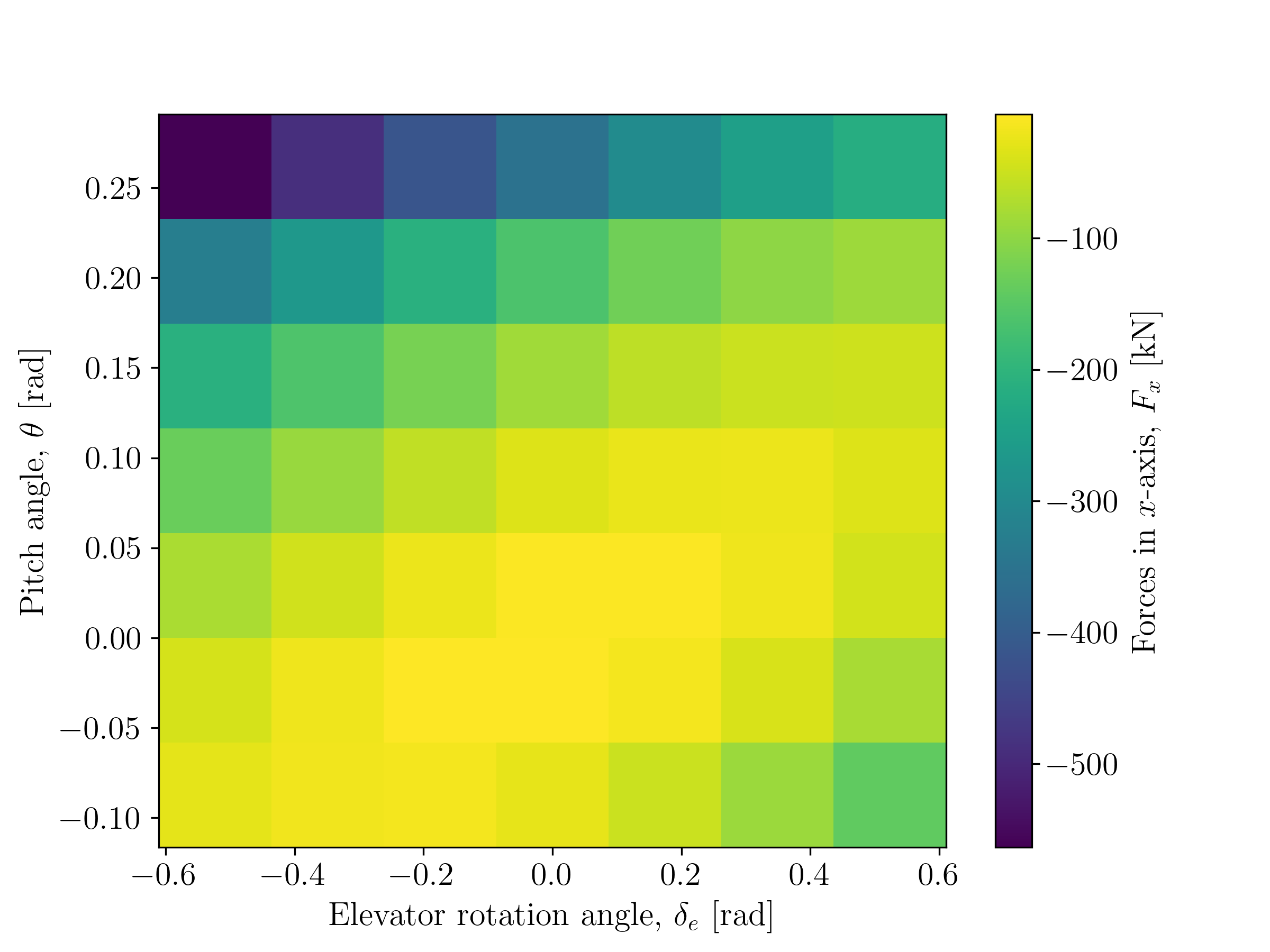}
    \caption{Sample of aerodynamic data that shows variation in $x$-axis forces while sweeping over aircraft pitch and elevator angle.
    Other parameters, such as the airspeed, center half-span, and wing half-span are kept constant in this data.}
    \label{fig:aero_sample}
\end{subfigure}
\hfill
\begin{subfigure}[t]{0.49\textwidth}
    \centering
    \includegraphics[width=\textwidth]{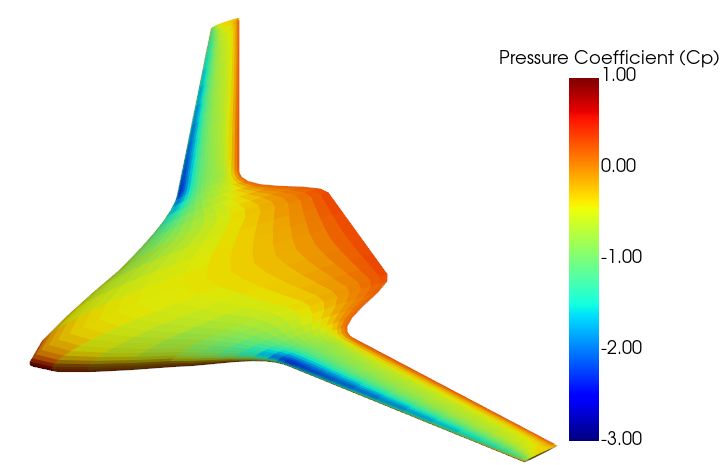}
    \caption{Sample visualization of the panel method computation of coefficient of pressure for the BWB.}
    \label{fig:BWB_forces}
\end{subfigure}
\caption{Outputs from panel method code for BWB aircraft.}
\label{fig:aero_model}
\end{figure}

\section{Optimization Problem}\label{sec:optimization_problem_overview}
We consider three optimization problems, outlined in Section~\ref{subsec:des_opt_problem}, for the BWB geometry using the reachability metrics discussed in Section~\ref{subsec:reach_set_metrics}.
The solutions to these problems are discussed in Section~\ref{subsec:optimization_results}.
In all of the design optimization problems, we consider the design variables $\bd = [c,w]^\top$ where $c$ is the center half-span and $w$ is the wing half-span of the BWB, as labeled in Figure~\ref{fig:bwb_diagram}.
Since we collect aerodynamic data a priori, as discussed in Section~\ref{subsec:aero_data_collection}, we apply the bounds $c \in [3,7]$ m and $w \in [10,20]$ m and use initial values $[c_0,w_0]^\top = [5,12]^\top$ m for all optimization problems.
These initial values correspond with the original, undeformed outer mold line, which is sized according to the semi-empirical planform sizing equations for transport-category aircraft~\cite{raymer_AircraftDesignConceptual_2024}.
As outlined in Section~\ref{subsec:reach_set_prelims}, the reachable set is defined with initial set $\calX_0 = \theset{\bolds{0}}$.
At each step of the iteration, given design $\bd$, the aircraft is trimmed with an airspeed of $V_0 = 200$ m/s, pitch rate of $Q_0 = 0$ rad/s, and climb angle of $\gamma_0 = 0$ rad.
The resulting trimmed values of angle of attack $\alpha_0$, pitch angle $\theta_0$, elevator $\deltaeo$, and thrust $\deltato$ are used to compute the linear model matrices $\bA(\bd),\bB(\bd)$ and $\calU(\bd) = [-\deltato,1-\deltato]\times [-0.523 - \deltaeo, 0.523 - \deltaeo]$.
This choice of allowable inputs ensures the controls for thrust and throttle stay within their respective ranges once the trimmed inputs are factored in.

\subsection{Design Optimization Formulations}\label{subsec:des_opt_problem}

The first optimization problem (cf. Section~\ref{subsubsec:volmax}) is to maximize the volume of the reachable set, denoted by $\vol(\reachsets[T])$.
The second optimization problem (cf. Section~\ref{subsubsec:dirmax}) is to maximize the length of the reachable set in a predetermined axis of interest~$\bv$, denoted by $\textrm{Proj}_\bv(\cdot)$.
The third optimization problem  (cf. Section~\ref{subsubsec:volmax_dircon}) combines the first two by maximizing the volume of the reachable set while imposing that the length of the reachable set in the specified direction is increased.
We refer to the solutions of these three optimization problems as the \emph{volume-maximized}, \emph{directionally-maximized}, and \emph{volume-maximized, directionally-constrained} designs, respectively.

\subsubsection{Volume-Maximized Optimization}\label{subsubsec:volmax}
The first optimization problem we consider is volume-maximization: \begin{align}\label{eq:volume_max_optimization} \tag{VM}
        \max_{c,w} \quad & \vol(\reachsets[T;{[c,\ w]^\top}] )\\ \nonumber
        \textrm{s.t.} \quad  %
        & 3 \leq c \leq 7 \\ \nonumber 
        & 10 \leq w \leq 20,
\end{align}
Taking the volume of the reachable set serves as a general notion of maneuverability for the aircraft, as a larger set of reachable states implies that the aircraft can maneuver around a larger swath of the state space.
However, since the volume gives no priority to any state, this approach can lead to designs that have more limited maneuverability in specific axes after optimization.

\subsubsection{Direction-Maximization Optimization}\label{subsubsec:dirmax}
In cases where the engineer knows that some states are more important for the aircraft's desired maneuver at the given trim point, then using the volume of the reachable set may be too general of a consideration in the optimization problem.
Instead, the second optimization problem we consider, directional-maximization, maximizes the length of the reachable set in a pre-specified direction $\bv$:
\begin{align}\label{eq:dirmax_optimization} \tag{DM}
        \max_{c,w} \quad & \| \textrm{Proj}_\bv \reachsets[T;{[c, \ w]^\top}] \|_2 \\ \nonumber
        \textrm{s.t.} \quad  %
        & 3 \leq c \leq 7 \\ \nonumber 
        & 10 \leq w \leq 20,
\end{align}
Solving~\eqref{eq:dirmax_optimization} chooses $c$ and $w$ such that the projection of $\reachsets[T;{[c, \ w]^\top}]$ onto $\bv$ is as large as possible.

\subsubsection{Volume-Maximized, Directionally-Constrained Optimization}\label{subsubsec:volmax_dircon}
The third optimization problem we consider, volume-maximized and directionally-constrained, blends optimization problems~\eqref{eq:volume_max_optimization} and~\eqref{eq:dirmax_optimization} by maximizing the volume of the reachable set, but imposing a constraint on the projection of the reachable set:
\begin{align}\label{eq:volmax_dircon_optimization} \tag{VMDC}
        \max_{c,w} \quad & \vol (\reachsets[T;{[c, \ w]^\top}] )\\ \nonumber
        \textrm{s.t.} \quad  %
        & \| \textrm{Proj}_\bv \reachsets[T;{[c, \ w]^\top}] \|_2 \geq (1+\kappa) \|\textrm{Proj}_\bv \reachsets[T;{[c_0, \ w_0]^\top}] \|_2 \\ \nonumber 
        & 3 \leq c \leq 7 \\ \nonumber 
        & 10 \leq w \leq 20,
\end{align}
where $[c_0,w_0]^\top$ is the initial design and $\kappa \in (0,1]$ ensures an enlargement of the reachable set in the desired projection direction.
Taking this approach balances between~\eqref{eq:volume_max_optimization} and~\eqref{eq:dirmax_optimization} by finding values for the design variables that enlarge the reachable set without compromising along axis $\bv$.

\subsection{Optimization Results}\label{subsec:optimization_results}
We solve problems~\eqref{eq:volume_max_optimization},~\eqref{eq:dirmax_optimization}, and~\eqref{eq:volmax_dircon_optimization} using sequential quadratic programming via PySLSQP~\cite{joshy_PySLSQPTransparentPython_2024}.
Although this is a gradient-based approach, we do not provide an analytic expression for the gradient of the reachable set metrics due to the complexity of computing reachable sets.

However, we note that it is reasonable to expect the gradients to be well-defined.
This is because the reachable set metrics require computing trimmed states from the design parameters, approximating linear dynamics matrices $\bA(\bd),\bB(\bd)$ from the trimmed states, and sampling solutions of linear systems by computing $\bx(T) = \int^T_0 e^{\bA(d)(T-t)} \bB(\bd) \bu(t) \rd t$.
Sampling the solutions of linear systems involves composing a matrix exponential, product, and integral, which are continuously differentiable operations.
We approximate the system matrices from the trimmed states by linearly interpolating the aerodynamic data collected from the panel method code, as described in Section~\ref{subsec:aero_data_collection}, which is continuously differentiable everywhere outside of the sampled points themselves, which form a set of measure zero.
Lastly, the mapping from design variables to trimmed states is locally continuously differentiable, which can be shown using the implicit function theorem~\cite{rudin_PrinciplesMathematicalAnalysis_1976}.
More details on this are shown in the appendix.
Thus, we assume the gradients are well-defined and use finite-difference gradients.

Comparisons of the BWB geometries before and after each optimization problem are shown in Figure~\ref{fig:bwb_geometries}.

\begin{figure}
    \centering
    \begin{subfigure}[t]{0.25\textwidth}
        \centering
        \includegraphics[width=\textwidth]{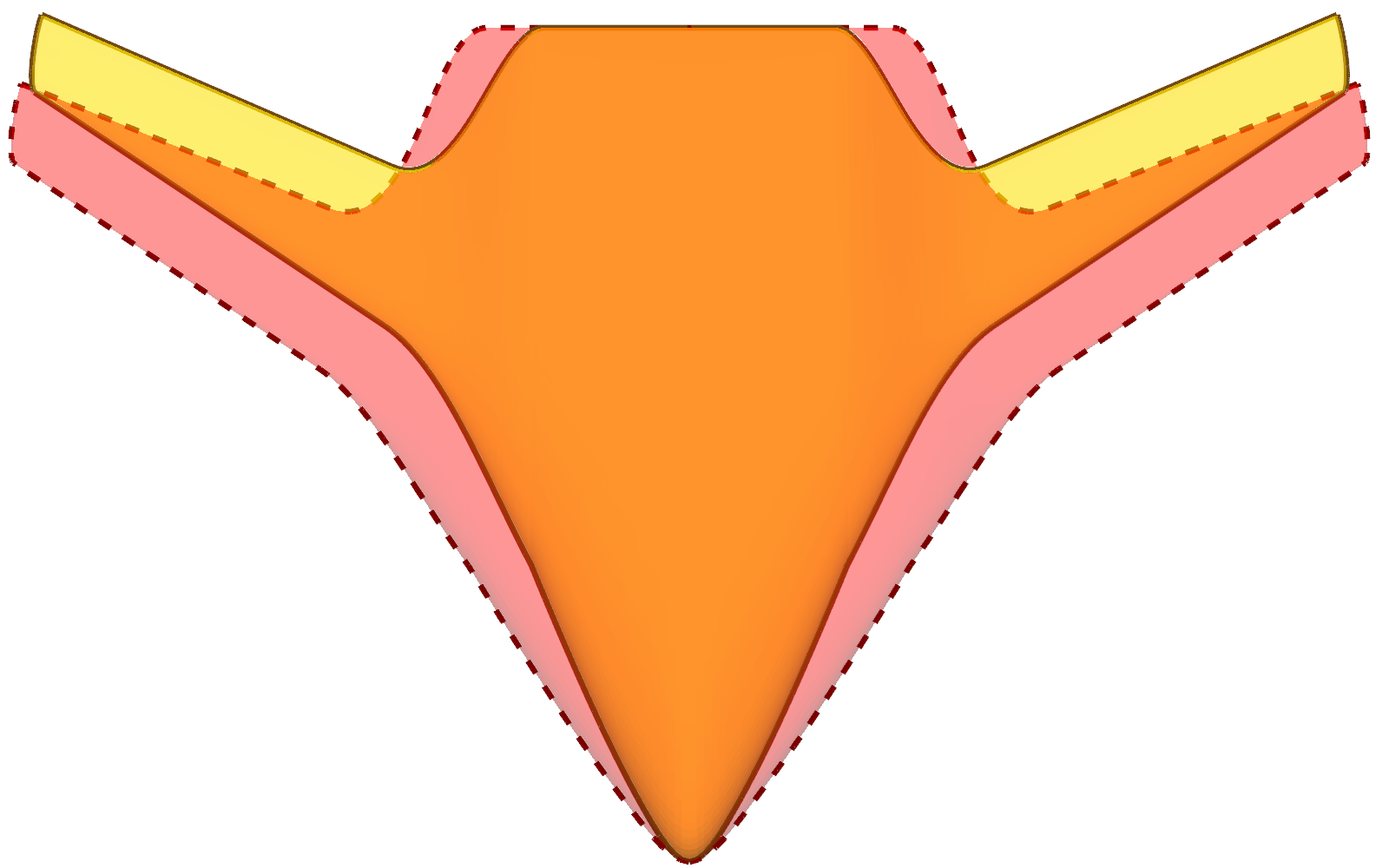}
        \caption{BWB design after problem~\eqref{eq:volume_max_optimization} shown in red with dashed outline.}
        \label{fig:volmax_geometry}
    \end{subfigure}
    \hfill
    \begin{subfigure}[t]{0.35\textwidth}
        \centering
        \includegraphics[width=\textwidth]{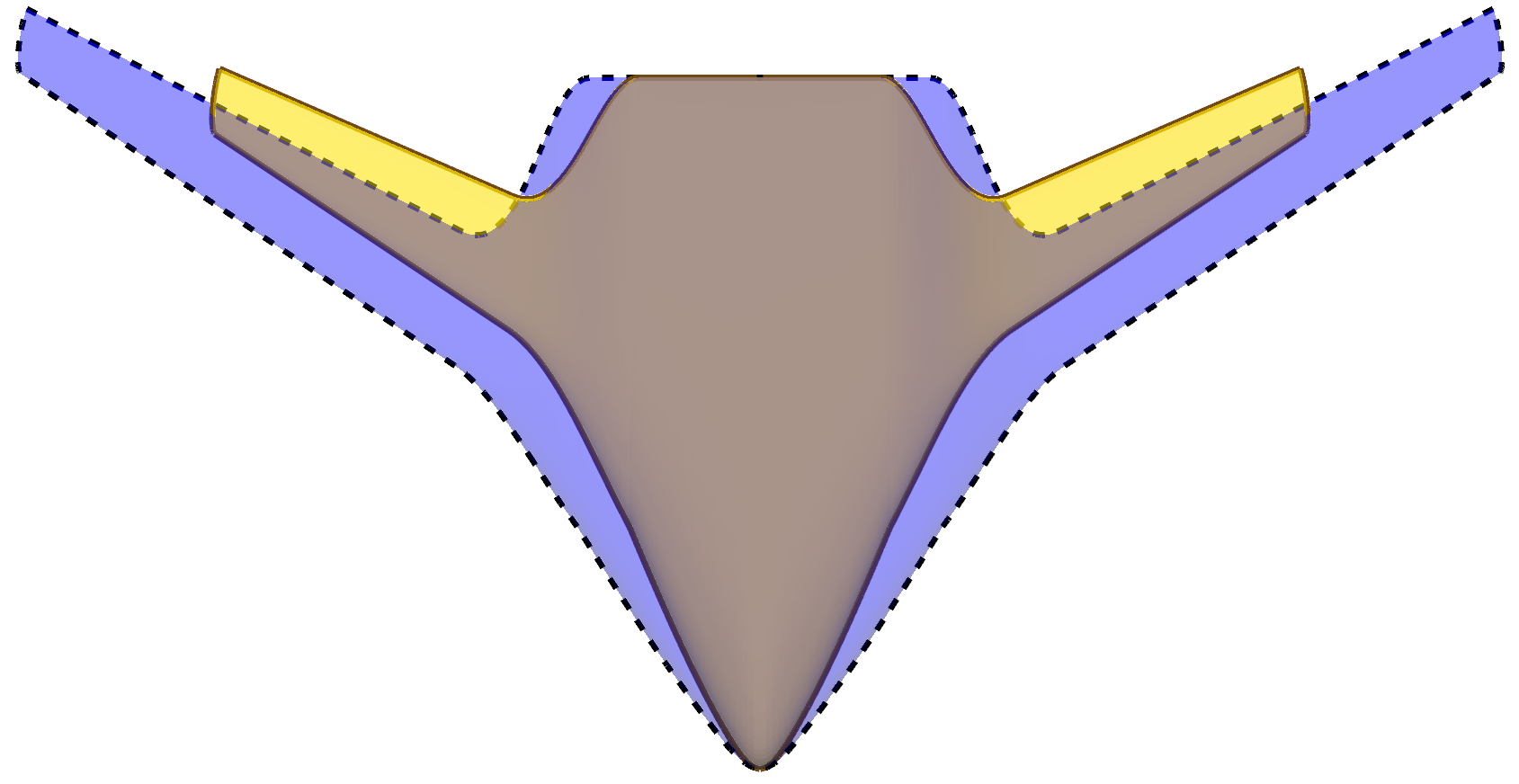}
        \caption{BWB design after problem~\eqref{eq:dirmax_optimization} shown in blue with dashed outline.}
        \label{fig:dirmax_geometry}
    \end{subfigure}
    \hfill
    \begin{subfigure}[t]{0.27\textwidth}
        \centering
        \includegraphics[width=\textwidth]{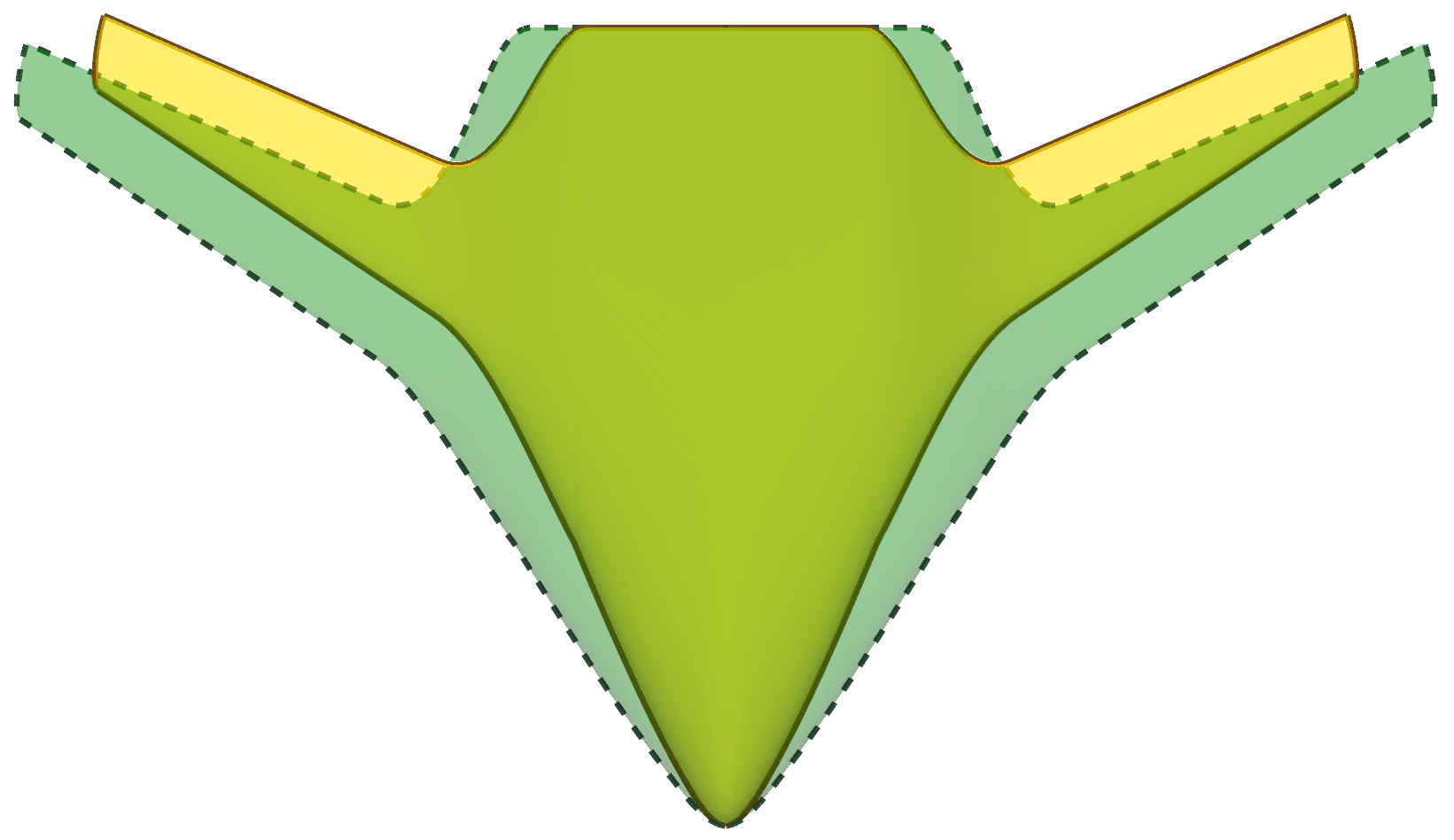}
        \caption{BWB design after problem~\eqref{eq:volmax_dircon_optimization} shown in green with dashed outline.}
        \label{fig:volmax_dircon_geometry}
    \end{subfigure}
   \caption{Comparisons of optimized BWB designs with the initial design, shown in yellow with solid outline.}
   \label{fig:bwb_geometries}
   \end{figure}   

\subsubsection{Solution of Volume Maximization}\label{subsubsec:solution_volmax}
Solving the optimization problem~\eqref{eq:volume_max_optimization}, we find optimal values of $[c^*,\ w^*]=[7,\ 10.5]$ m.
Compared with the initial design of $[c_0,\ w_0]^\top = [5,\ 12]^\top$ m,
this is an increase in the center half-span and a decrease in wing half-span.
The volume of the reachable set associated with this design change increases 267\% from 0.0088 to 0.0321.
The reachable set is four-dimensional, and its projections into two dimensions are shown in Figures~\ref{fig:volmax_reach_set}.
Figure~\ref{fig:volmax_set_speed_aoa} reflects increases in the BWB's ability to quickly change its angle of attack and airspeed, exceeding its capabilities from before optimization.
As seen in Figure~\ref{fig:volmax_set_pitch_rate_angle}, even though the volume of the reachable set for the optimized system is larger than for the initial design of the BWB, its projection onto pitch rate and pitch angle appears more lopsided after optimization.

\begin{figure}
 \centering
 \begin{subfigure}[b]{0.49\textwidth}
     \centering
     \includegraphics[width=\textwidth]{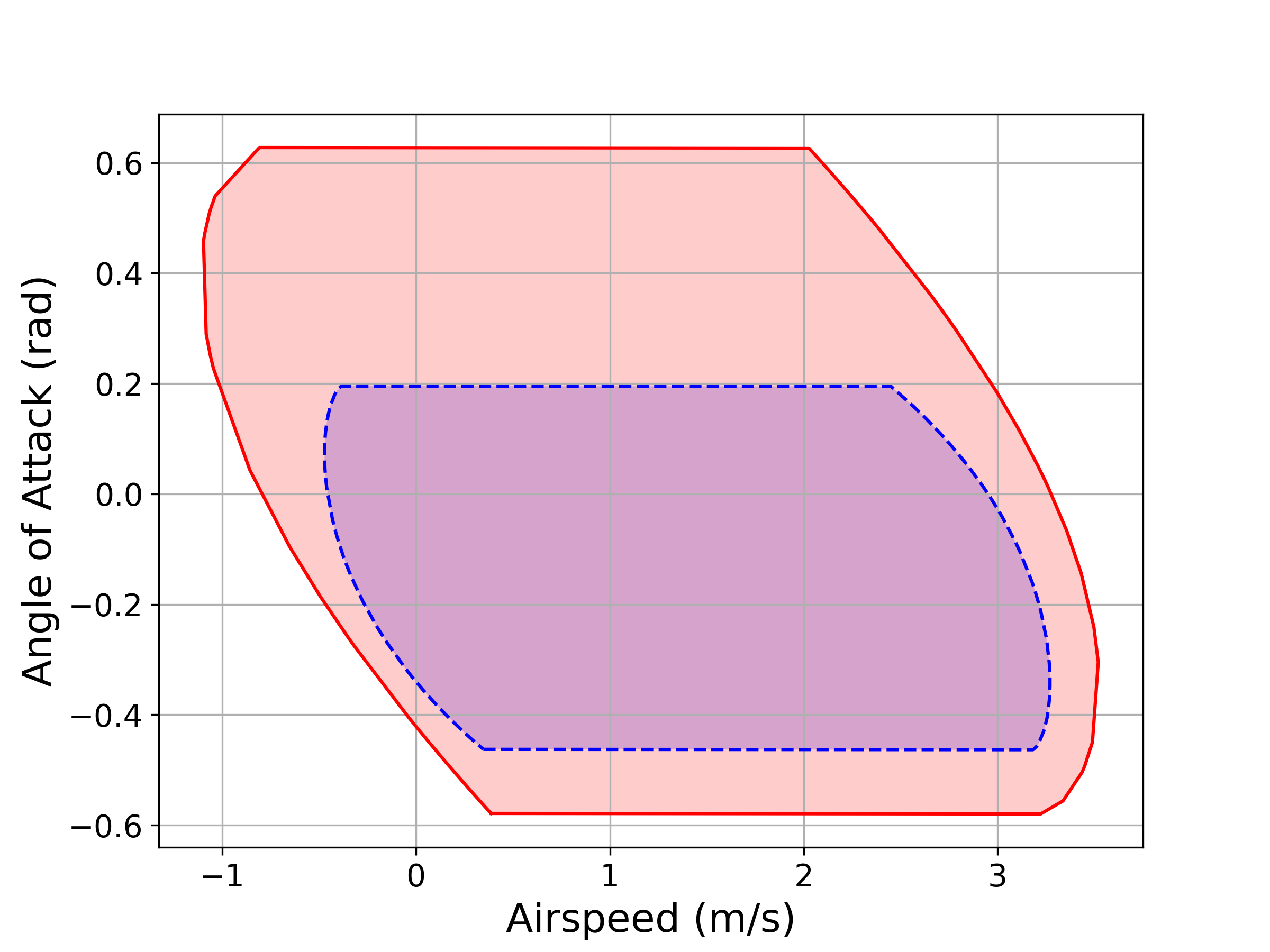}
     \caption{Projection onto the axes of airspeed and angle of attack.}
     \label{fig:volmax_set_speed_aoa}
 \end{subfigure}
 \hfill
 \begin{subfigure}[b]{0.49\textwidth}
     \centering
     \includegraphics[width=\textwidth]{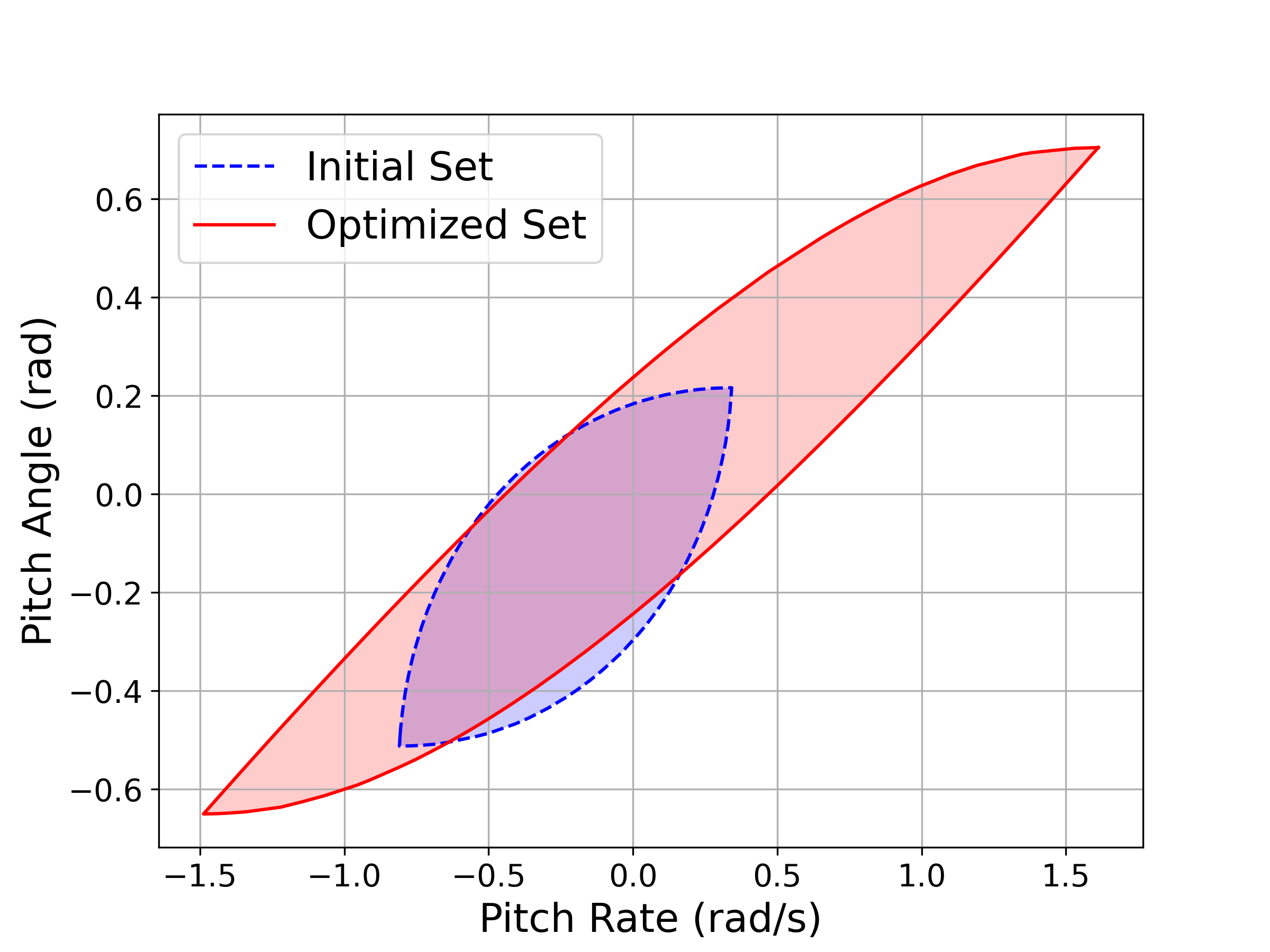}
     \caption{Projection onto the axes of pitch rate and pitch angle.}
     \label{fig:volmax_set_pitch_rate_angle}
 \end{subfigure}
\caption{Projections of four-dimensional reachable set for problem~\eqref{eq:volume_max_optimization} into two dimensions.}
\label{fig:volmax_reach_set}
\end{figure}    

\subsubsection{Solution of Direction-Maximization}\label{subsubsec:solution_dirmax}
The optimization problem~\eqref{eq:dirmax_optimization} allows us to prevent the lopsidedness in Figure~\ref{fig:volmax_set_pitch_rate_angle} if we consider its projection onto $\bv = \left[  0 , 0 , \cos(110^\circ) , \sin(110^\circ) \right]^\top$.
With this choice of $\bv$, the solution yields $[c^*,\ w^*] = [7,\ 18.0]$ m. Compared with the solution of~\eqref{eq:volume_max_optimization}, the optimal center half-span is the same, but the optimal wing half-span is larger than the initial value of 12 meters.
Comparisons of the reachable sets and their projections onto $\bv$ for the optimized and initial designs are shown in Figure~\ref{fig:dirmax_reach_set}.
For the optimized design, the projection of the reachable set onto $\bv$ has Euclidean length 0.637, which is approximately 27.4\% larger than for the initial design, which has a projection with length 0.500.
As seen in Figures~\ref{fig:dirmax_speed_aoa} and~\ref{fig:dirmax_pitch_rate_angle}, even though the focus on the reachable set projection improves the aircraft's maneuverability with respect to pitching motion, there appears to be worse maneuverability in airspeed and angle of attack.

\begin{figure}
 \centering
 \begin{subfigure}[b]{0.49\textwidth}
     \centering
     \includegraphics[width=\textwidth]{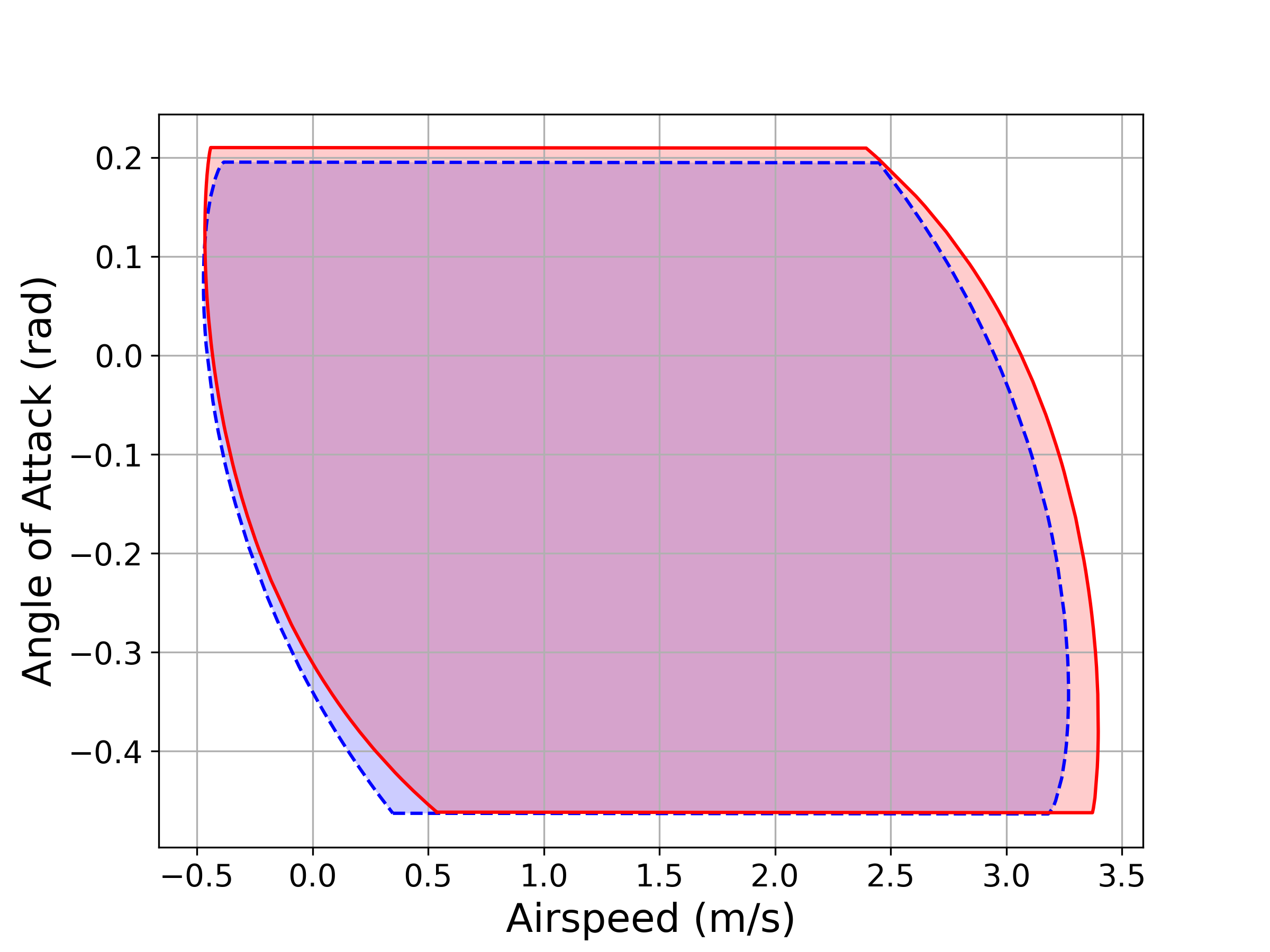}
     \caption{Projection onto the axes of airspeed and angle of attack.}
     \label{fig:dirmax_speed_aoa}
 \end{subfigure}
 \hfill
 \begin{subfigure}[b]{0.49\textwidth}
     \centering
     \includegraphics[width=\textwidth]{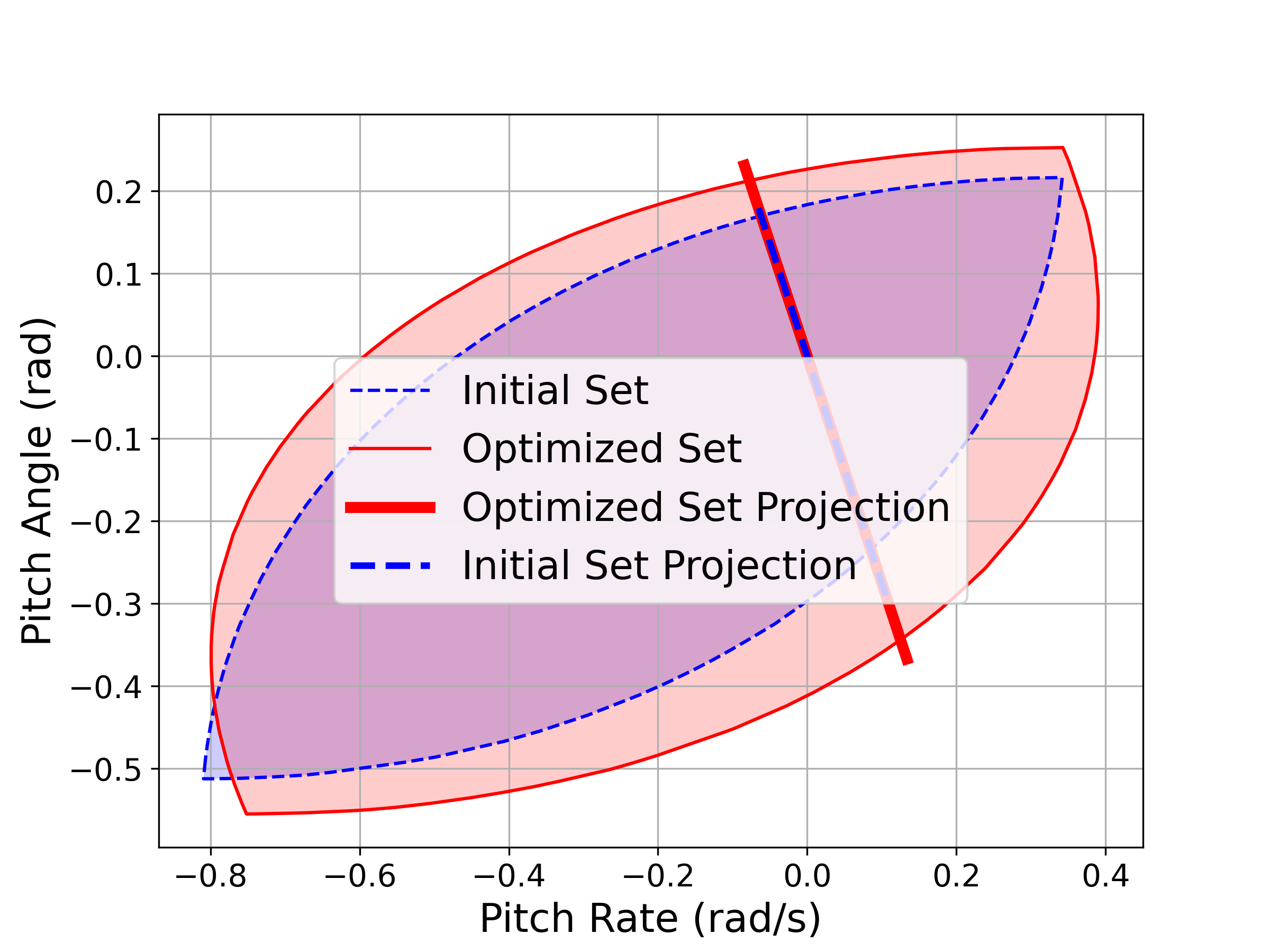}
     \caption{Projection onto the axes of pitch rate and pitch angle.}
     \label{fig:dirmax_pitch_rate_angle}
 \end{subfigure}
\caption{Projections of reachable set for problem~\eqref{eq:dirmax_optimization} into two dimensions.}
\label{fig:dirmax_reach_set}
\end{figure}

\subsubsection{Solution of Volume-Maximization with Directional Constraint}\label{subsubsec:solution_volmax_dircon}
The optimization problem~\eqref{eq:volmax_dircon_optimization} blends the results of the previous two problems.
By maximizing the volume while imposing that the projection along $\bv$ increases by 15\% (using $\kappa = 0.15$), we find $[c^*,\ w^*] = [7,\ 12.6]$ m.
Similarly to the previous optimization problems~\eqref{eq:volume_max_optimization} and~\eqref{eq:dirmax_optimization}, the optimal value for $c$ is still seven meters, but the optimal wing half-span $w$ lies in between the optimal value of $w$ for the other two problems.
These design variables lead to a 247\% increase in the reachable set volume from 0.0088 to 0.0304.
The projection of the optimized reachable set onto $\bv$ is 0.580, which is 16.2\% larger than for the initial design and satisfies the directional constraint.
Comparisons between the reachable sets and their projections of the initial and optimized design for~\eqref{eq:volmax_dircon_optimization} are shown in Figure~\ref{fig:volmax_dircon_reach_set}.
It is clear from Figure~\ref{fig:volmax_dircon_pitch_rate_angle} that the additional constraint on the set projection prevents the optimized reachable set from becoming overly lopsided, while still increasing the volume of the reachable set.

\begin{figure}
 \centering
 \begin{subfigure}[b]{0.49\textwidth}
     \centering
     \includegraphics[width=\textwidth]{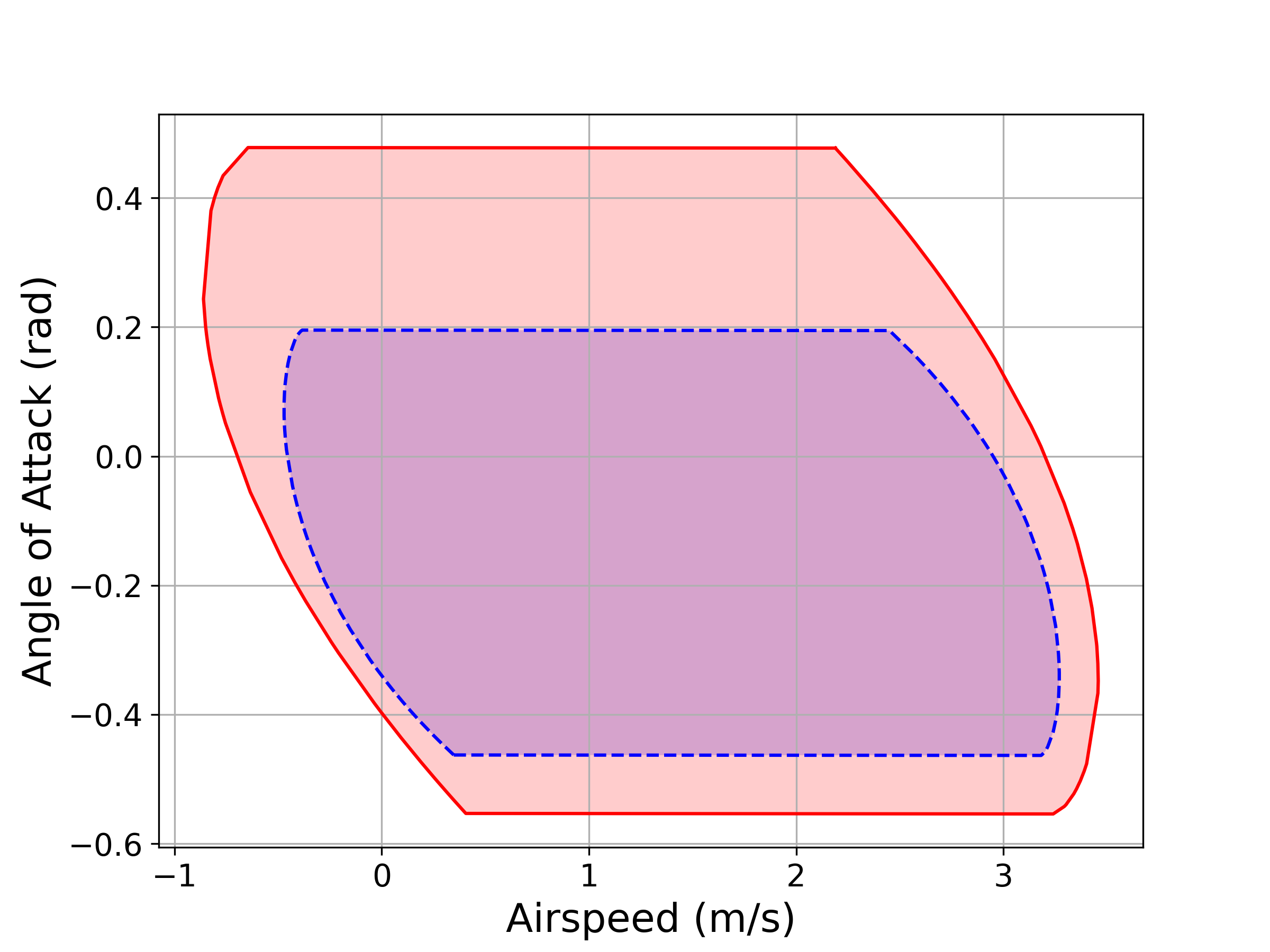}
     \caption{Projection onto the axes of airspeed and angle of attack.}
     \label{fig:volmax_dircon_speed_aoa}
 \end{subfigure}
 \hfill
 \begin{subfigure}[b]{0.49\textwidth}
     \centering
     \includegraphics[width=\textwidth]{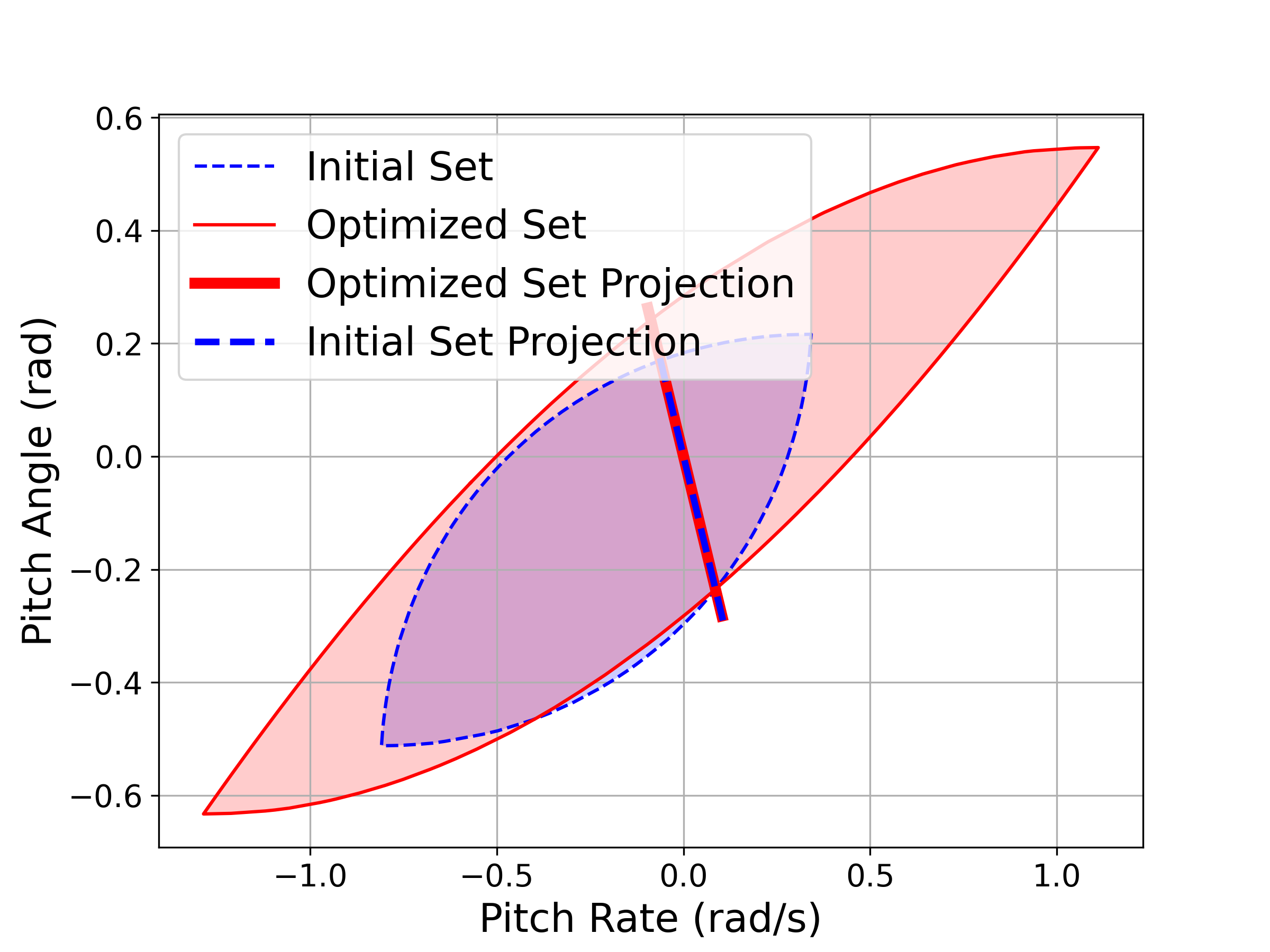}
     \caption{Projection onto the axes of pitch rate and pitch angle.}
     \label{fig:volmax_dircon_pitch_rate_angle}
 \end{subfigure}
\caption{Projections of reachable set for problem~\eqref{eq:volmax_dircon_optimization} into two dimensions.}
\label{fig:volmax_dircon_reach_set}
\end{figure}

\section{Improvements in Controlled Performance Through Reachability-Based Design Optimization}

We analyze the controlled capabilities of the BWB aircraft by comparing its performances in control tasks using the initial and optimized aircraft designs.
Taking advantage of the controller-agnostic nature of the proposed reachability analysis, we consider an optimal linear-quadratic (LQ) reference tracking controller in Section~\ref{subsec:lq_tracking} and an linear-quadratic-integral (LQI) controller for reference tracking in Section~\ref{subsec:integral_tracking}, both applied to the linearized dynamics of the aircraft.
In Section~\ref{subsec:nonlinear_tracking}, we apply LQI control for the nonlinear dynamics.
In all of the following simulations, we apply bounds on the inputs to capture the effects of actuator saturation.
To compare the controlled performance between designs, we use the $\Lp[2]$-norm, defined for $\bsigma:[t_0,T] \to \R^{n_\sigma}$ as:
\begin{equation}
    \Lpnorm[2]{\bsigma(\cdot)} = \left( \int\limits_{t_0}^{T} \sum\limits_{i=1}^{n_\sigma} \sigma_i(t)^2 \rd t \right)^{{1}/{2}}.
\end{equation}

\subsection{Optimal Linear-Quadratic Tracking}\label{subsec:lq_tracking}

We apply the optimal linear-quadratic (LQ) reference tracking controller, as outlined in~\cite{anderson2007optimal}, to the linearized model~\eqref{eq:lavret_wise_linear_long_model}.
Given a starting point $\bx_0$ at time $t_0$, this controller minimizes the quadratic cost:
\begin{align}\label{eq:quadratic_cost}
    \bu^*(\cdot\,; \bxi[0],t_0,T) = \argmin_{\bu(\cdot):[t_0,T]\to\calU} \int\limits^T_{t_0} & \left[ \bu(\tau)^\top \bolds{R} \bu(\tau)\right. \\ \nonumber
                                & + \left. \left( \bx(\tau;\bxi[0],t_0,\bu(\cdot)) - \bxi[\textrm{ref}](\tau) \right)^\top \bolds{Q} \left( \bx(\tau; \bxi[0],t_0,\bu(\cdot)) - \bxi[\textrm{ref}](\tau) \right) \right] \rd \tau ,
\end{align}
where $\bxi[\textrm{ref}](\cdot)$ is the reference trajectory, $\bolds{Q}$ is symmetric nonnegative definite, and $\bolds{R}$ is symmetric positive definite.
The reference is assumed to be given in its entirety a priori.

We consider two separate cases of a constant velocity reference at four m/s and constant pitch reference at 0.5 rad.
For the case of velocity tracking, we use $\bolds{Q} = \diag([1000, \ 0,\ 0 ,\ 0]) $ and $\bolds{R} = \diag([1000,\ 1000])$, where $\diag(\bolds{\beta})$ refers to the matrix with elements of $\bolds{\beta}$ on its diagonal.
For the pitch tracking, we use $\bolds{Q} = \diag([0, \ 0,\ 0 ,\ 1000]) $ and $\bolds{R} = \diag([100,\ 100])$
The system trajectories and control signals for the initial and three optimized designs are shown in Figures~\ref{fig:lqr_volmax},~\ref{fig:lqr_dirmax}, and~\ref{fig:lqr_volmax_dircon}.
The $\Lp[2]$ norms of the tracking errors and control costs of each design in both the velocity and pitch tracking problems are shown in Table~\ref{tab:tracking_error_lq}.

\begin{figure}[ht]
    \centering
    \begin{subfigure}[b]{0.49\textwidth}
        \centering
        \includegraphics[width=\textwidth]{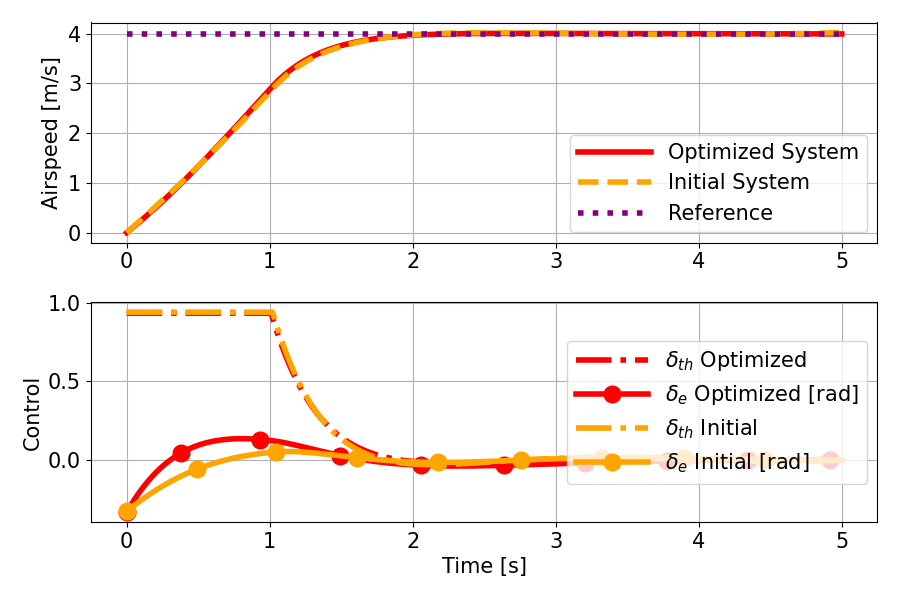}
        \caption{Comparison of velocity reference tracking.}
        \label{fig:lqr_velocity_volmax}
    \end{subfigure}
    \hfill
    \begin{subfigure}[b]{0.49\textwidth}
        \centering
        \includegraphics[width=\textwidth]{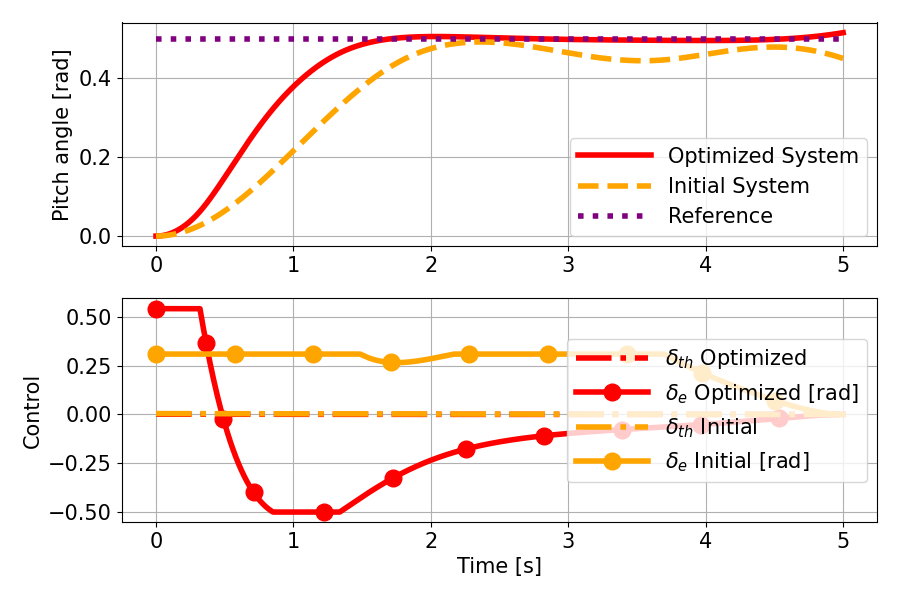}
        \caption{Comparison of pitch reference tracking.}
        \label{fig:lqr_pitch_volmax}
    \end{subfigure}
   \caption{Comparisons of the BWB linear-longitudinal dynamics for initial design and volume-maximized~\eqref{eq:volume_max_optimization} design using optimal LQ reference tracking controllers.}
   \label{fig:lqr_volmax}
   \end{figure}
   
   \begin{figure}[ht]
    \centering
    \begin{subfigure}[b]{0.49\textwidth}
        \centering
        \includegraphics[width=\textwidth]{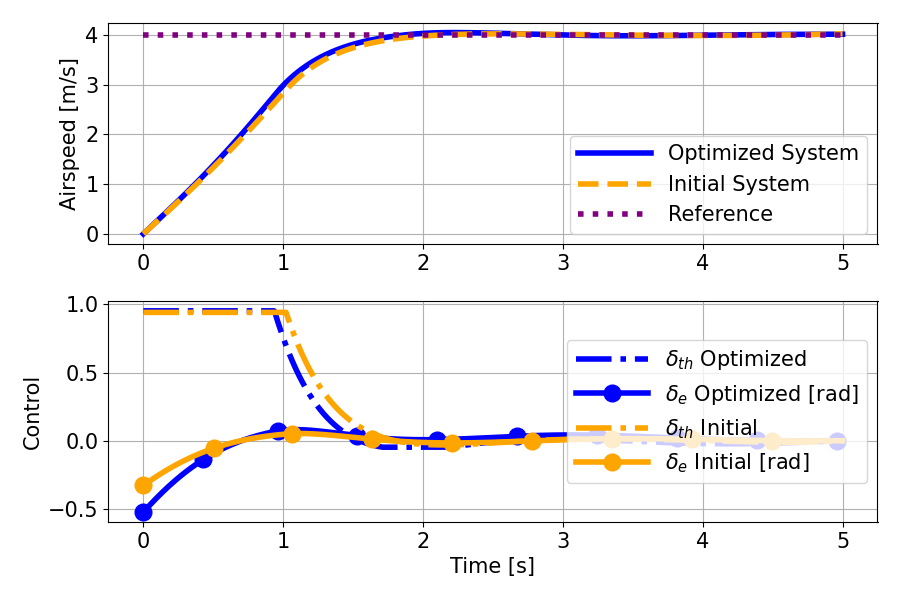}
        \caption{Comparison of velocity reference tracking.}
        \label{fig:lqr_velocity_dirmax}
    \end{subfigure}
    \hfill
    \begin{subfigure}[b]{0.49\textwidth}
        \centering
        \includegraphics[width=\textwidth]{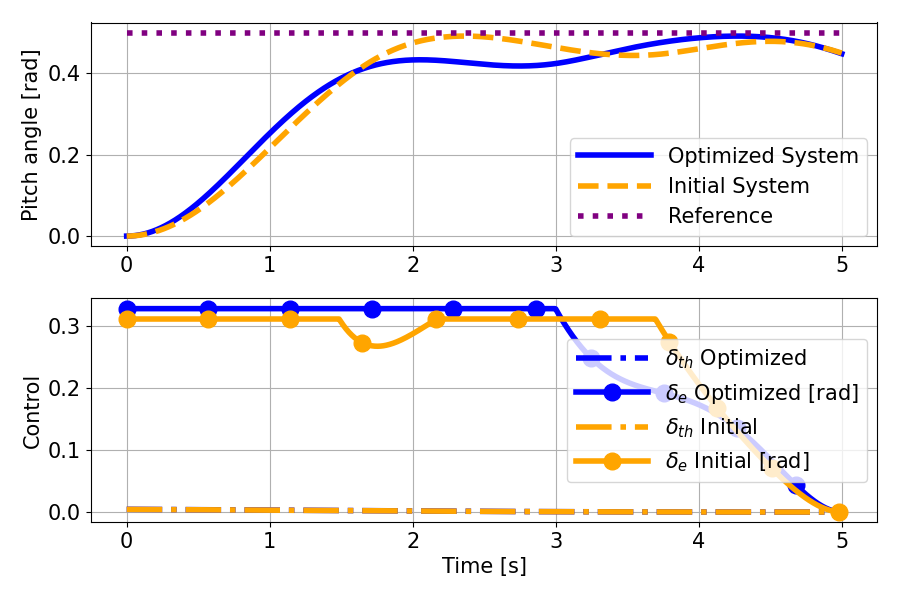}
        \caption{Comparison of pitch reference tracking.}
        \label{fig:lqr_pitch_dirmax}
    \end{subfigure}
   \caption{Comparisons of the BWB linear-longitudinal dynamics for initial design and directionally-maximized design using optimal LQ reference tracking controllers.}
   \label{fig:lqr_dirmax}
   \end{figure}
   
   \begin{figure}[ht]
    \centering
    \begin{subfigure}[b]{0.49\textwidth}
        \centering
        \includegraphics[width=\textwidth]{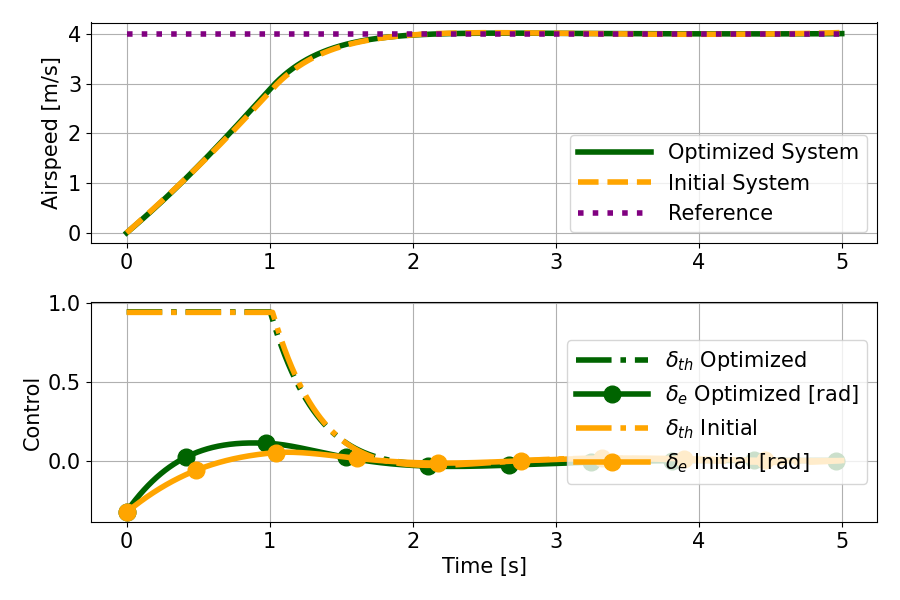}
        \caption{Comparison of velocity reference tracking.}
        \label{fig:lqr_velocity_volmax_dircon}
    \end{subfigure}
    \hfill
    \begin{subfigure}[b]{0.49\textwidth}
        \centering
        \includegraphics[width=\textwidth]{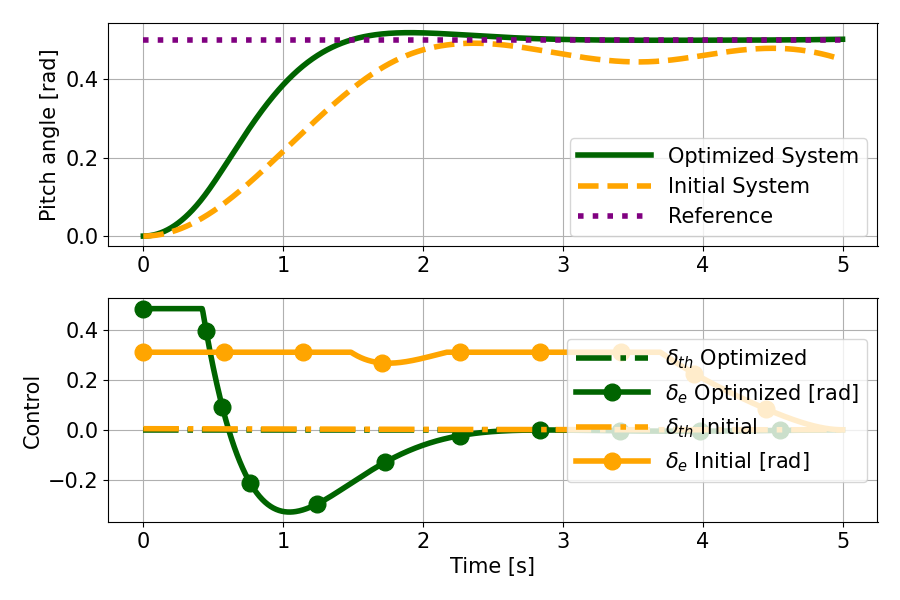}
        \caption{Comparison of pitch reference tracking.}
        \label{fig:lqr_pitch_volmax_dircon}
    \end{subfigure}
   \caption{Comparisons of the BWB linear-longitudinal dynamics for initial design and volume-maximized directionally-constrained design using optimal LQ reference tracking controllers.}
   \label{fig:lqr_volmax_dircon}
   \end{figure}

\begin{table}[ht]
\centering
\begin{tabular}{C{2.5cm}|C{2.5cm}|C{2.5cm}|C{2.5cm}|C{2.5cm}}
Design & Velocity Tracking $\Lp[2]$ Error (\% Improvement) & Velocity Tracking Control $\Lp[2]$ Cost (\% Improvement) & Pitch Tracking $\Lp[2]$ Error (\% Improvement) & Pitch Tracking Control $\Lp[2]$ Cost (\% Improvement) \\ \hline

Initial Model                                                   & 2.802 & 1.023 & 0.458 & 0.614 \\ \hline
Vol. Max. Opt.~\eqref{eq:volume_max_optimization}               & 2.799 (0.1\%) & 1.014 (0.9\%) & 0.364 (20.5\%) & 0.629 (-2.5\%) \\ \hline
Dir. Max. Opt.~\eqref{eq:dirmax_optimization}                   & 2.749 (1.9\%) & 1.009 (1.4\%) & 0.443 (3.2\%) & 0.619 (-0.9\%) \\ \hline
Vol. Max. Dir. Con. Opt.~\eqref{eq:volmax_dircon_optimization}  & 2.792 (0.4\%) & 1.020 (0.3\%) & 0.369 (19.4\%) & 0.431 (29.7\%) \\
\end{tabular}
\caption{Tracking errors and control costs in optimal LQ reference tracking of velocity and pitch commands. Percent improvement shows decrease in cost or error with respect to initial design (positive is reduction, negative is increase).} 
\label{tab:tracking_error_lq}
\end{table}

The volume-maximized (VM) design, shown in Figure~\ref{fig:lqr_volmax}, has lower tracking errors and control costs than the initial model in both tracking cases.
For the velocity reference, the tracking error is reduced by 0.1\% and the control cost is reduced by 0.9\%.
For the pitch reference, the tracking error is reduced by 20.5\% with a 2.5\% increase in control cost.
This result is intuitive because this design is optimized for the reachable set volume.
Although the reachable set is more lopsided in the pitch rate and pitch angle axes, this design significantly reduced error in the pitch tracking with an increase in control effort.

The direction-maximized (DM) design, shown in Figure~\ref{fig:lqr_dirmax}, has lower tracking error than the initial design in both cases, reducing the velocity tracking error by 1.9\% and pitch tracking error by 3.2\%.
Although this improvement is accompanied by a 1.4\% lower control cost for the velocity tracker, the pitch tracking requires 0.9\% more control energy.
While it doesn't achieve as drastic of an improvement in its pitch angle tracking error when compared with the VM design, the DM design is able to attain good tracking with lower control costs.
The vector $\bv$ was chosen to prevent the lopsidedness of the reachable set in Figure~\ref{fig:volmax_reach_set}, but is not necessarily the optimal choice for improving maneuverability in either velocity or pitch tracking.
Moreover, the results for the DM model imply that careful selection of $\bv$ could yield large improvements.

Lastly, the volume-maximized, directionally constrained (VMDC) design, shown in Figure~\ref{fig:lqr_volmax_dircon} appears to split the difference between the VM and DM designs, decreasing the velocity tracking error by 0.4\% and pitch tracking error by 19.4\%.
Although there is only a 0.3\% decrease in control cost for the velocity case, there is a 29.7\% decrease in control cost for the pitch case, which exceeds both of the previous two models.
This result signifies that, although both metrics based on the reachable set's volume and eccentricity yielded beneficial design modifications for control, combining them improves the aircraft's controlled performance even further.

\subsection{Linear-Quadratic-Integral Control Tracking}\label{subsec:integral_tracking}
In addition to the analysis of the linearized dynamics in reference tracking with optimal LQ controllers, we also consider integral controllers with LQR.
In this setting, we consider an output of interest, $y=\bolds{C}\bx$, where $\bolds{C} \in \R^{1 \times n}$.
As in Section~\ref{subsec:lq_tracking}, we consider both velocity and pitch angle tracking, using
$\bolds{C} = [1,\ 0,\ 0,\ 0]$ and $\bolds{C} = [0,\ 0,\ 0,\ 1]$ respectively.
Appending an integrator $z$ to our state, we have the dynamics:
\begin{equation}
    \dot{\hat{\bx}} = \begin{bmatrix}
    \dot{\bx} \\ \dot{z}
    \end{bmatrix} = \begin{bmatrix}
        \bA & \bolds{0}_{n \times 1} \\ \bolds{C} & 0
    \end{bmatrix} \begin{bmatrix} \bx \\ z \end{bmatrix} + \begin{bmatrix} \bB \\ \bolds{0}_{1 \times m} \end{bmatrix} \bu + \begin{bmatrix} \bolds{0}_{n \times 1} \\ -r_{\textrm{ref}} \end{bmatrix} = \hat{\bA} \hat{\bx} + \hat{\bB} + \bu + \begin{bmatrix}
        \bolds{0}_{n \times 1} \\ -r_{\textrm{ref}}
    \end{bmatrix}
\end{equation}
where $\hat{\bx} = [\bx,\ z]^\top$, $r_{\textrm{ref}}$ is the reference velocity or pitch angle, and $\hat{\bA},\hat{\bB}$ are the block matrices.
Then, we solve for the LQR controller, which minimizes the cost function
\begin{equation}\label{eq:infinite_horizon_lqr_cost}
    \bu^*(\cdot\,; \hat{\bx}_0,t_0) = \argmin_{\bu(\cdot):[t_0,\infty]\to\calU} \int\limits^\infty_{t_0} \left[ \bu(\tau)^\top \bolds{R} \bu(\tau) + \hat{\bx}(\tau;\hat{\bx}_0,t_0,\bu(\cdot))^\top \bolds{Q} \hat{\bx}(\tau; \hat{\bx}_0,t_0,\bu(\cdot)) \right] \rd \tau.
\end{equation}
In the velocity tracking case, we use $\bolds{Q} = \diag([1,\ 1,\ 1,\ 1,\ 1])$ and $\bolds{R} = \diag([0.1,\ 0.1])$.
For pitch angle tracking, we use $\bolds{Q} = \diag([0,\ 1,\ 0,\ 1,\ 100])$ and $\bolds{R} = \diag([0.1,\ 10])$.

As before, we use a velocity reference of four m/s and a pitch angle reference of 0.5 rad for the two cases.
The system trajectories and control signals for the designs associated with optimization problems~\eqref{eq:volume_max_optimization},~\eqref{eq:dirmax_optimization}, and~\eqref{eq:volmax_dircon_optimization} are shown in Figures~\ref{fig:integral_volmax},~\ref{fig:integral_dirmax}, and~\ref{fig:integral_volmax_dircon}.
Tracking errors and control costs associated with each problem are given in Table~\ref{tab:tracking_error_integral}.

\begin{figure}[ht]
    \centering
    \begin{subfigure}[b]{0.49\textwidth}
        \centering
        \includegraphics[width=\textwidth]{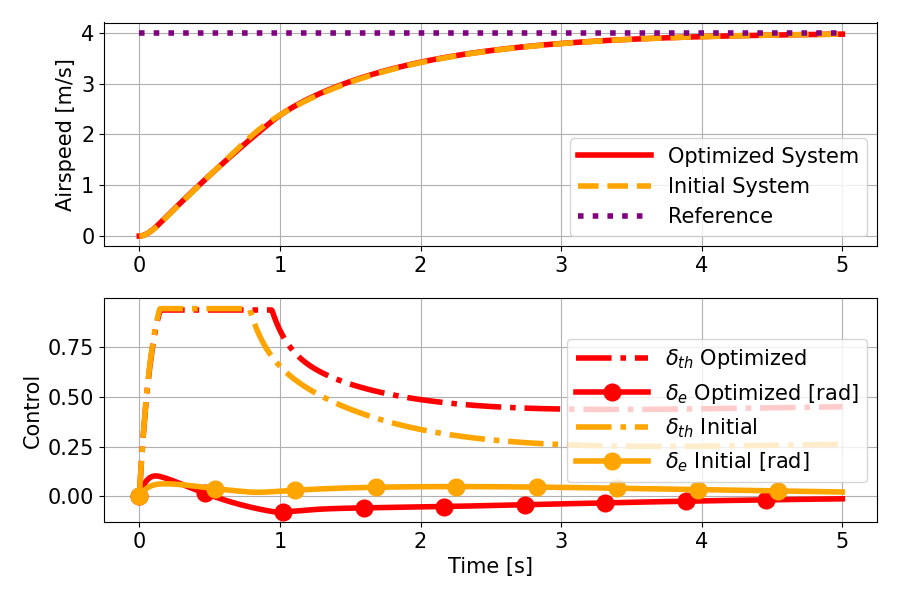}
        \caption{Comparison of velocity reference tracking.}
        \label{fig:integral_velocity_volmax}
    \end{subfigure}
    \hfill
    \begin{subfigure}[b]{0.49\textwidth}
        \centering
        \includegraphics[width=\textwidth]{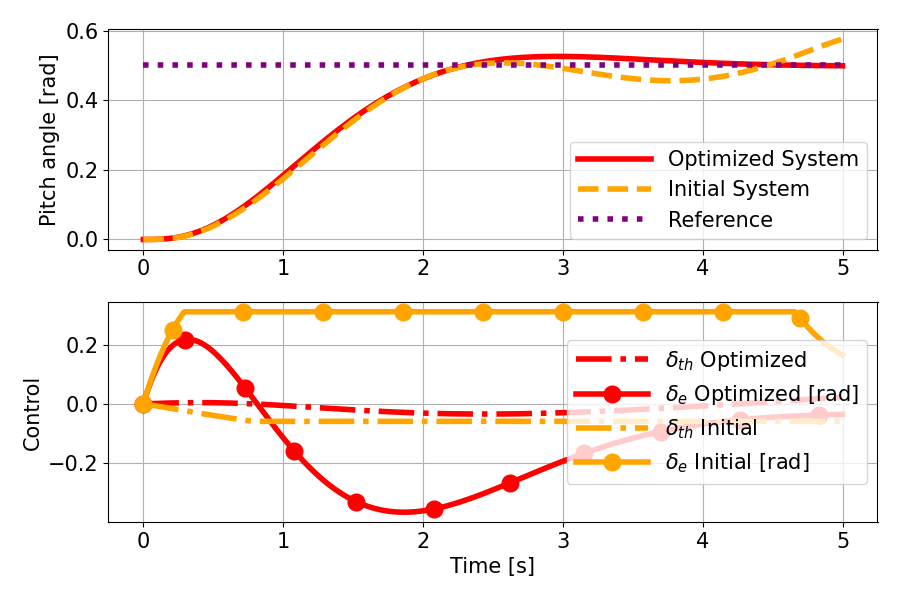}
        \caption{Comparison of pitch reference tracking.}
        \label{fig:integral_pitch_volmax}
    \end{subfigure}
   \caption{Comparisons of the BWB linear-longitudinal dynamics for initial design and volume-maximized~\eqref{eq:volume_max_optimization} design using integral control.}
   \label{fig:integral_volmax}
\end{figure}

\begin{figure}[ht]
    \centering
    \begin{subfigure}[b]{0.49\textwidth}
        \centering
        \includegraphics[width=\textwidth]{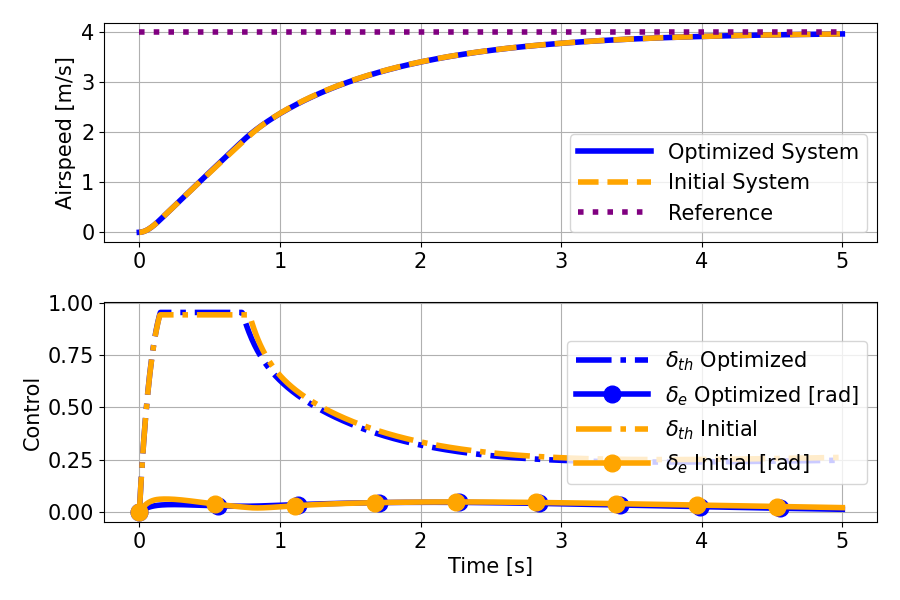}
        \caption{Comparison of velocity reference tracking.}
        \label{fig:integral_velocity_dirmax}
    \end{subfigure}
    \hfill
    \begin{subfigure}[b]{0.49\textwidth}
        \centering
        \includegraphics[width=\textwidth]{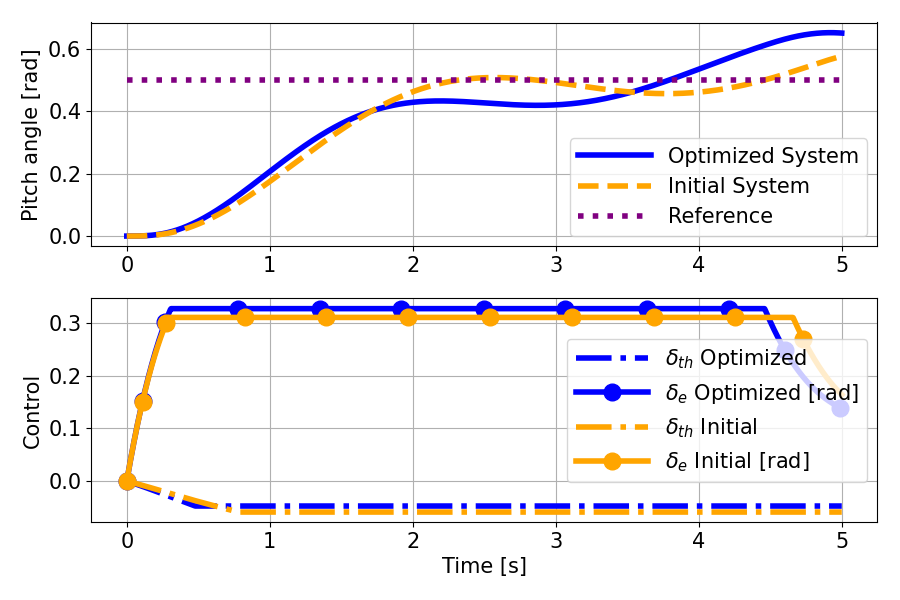}
        \caption{Comparison of pitch reference tracking.}
        \label{fig:integral_pitch_dirmax}
    \end{subfigure}
   \caption{Comparisons of the BWB linear-longitudinal dynamics for initial design and directionally-maximized~\eqref{eq:dirmax_optimization} design using integral control.}
   \label{fig:integral_dirmax}
\end{figure}

\begin{figure}[ht]
    \centering
    \begin{subfigure}[b]{0.49\textwidth}
        \centering
        \includegraphics[width=\textwidth]{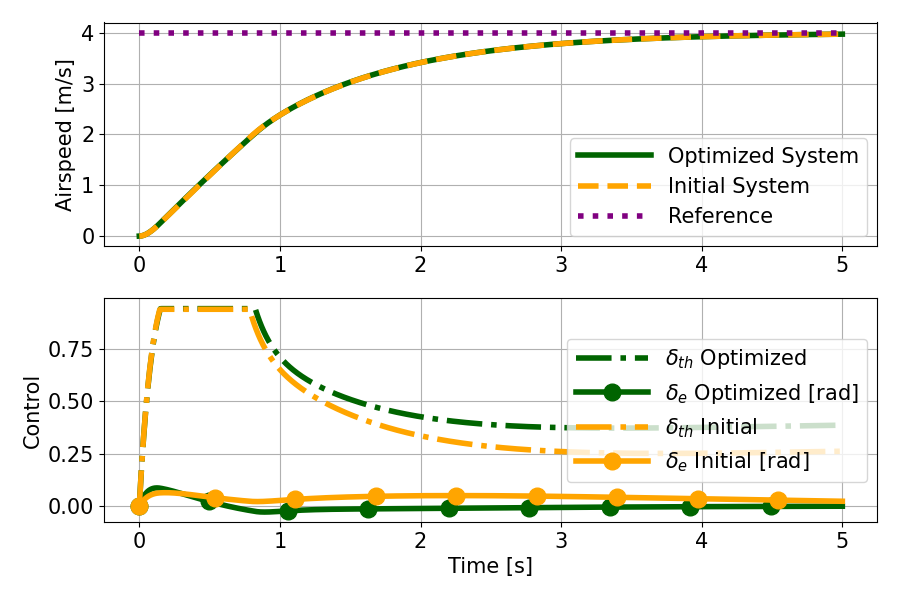}
        \caption{Comparison of velocity reference tracking.}
        \label{fig:integral_velocity_volmax_dircon}
    \end{subfigure}
    \hfill
    \begin{subfigure}[b]{0.49\textwidth}
        \centering
        \includegraphics[width=\textwidth]{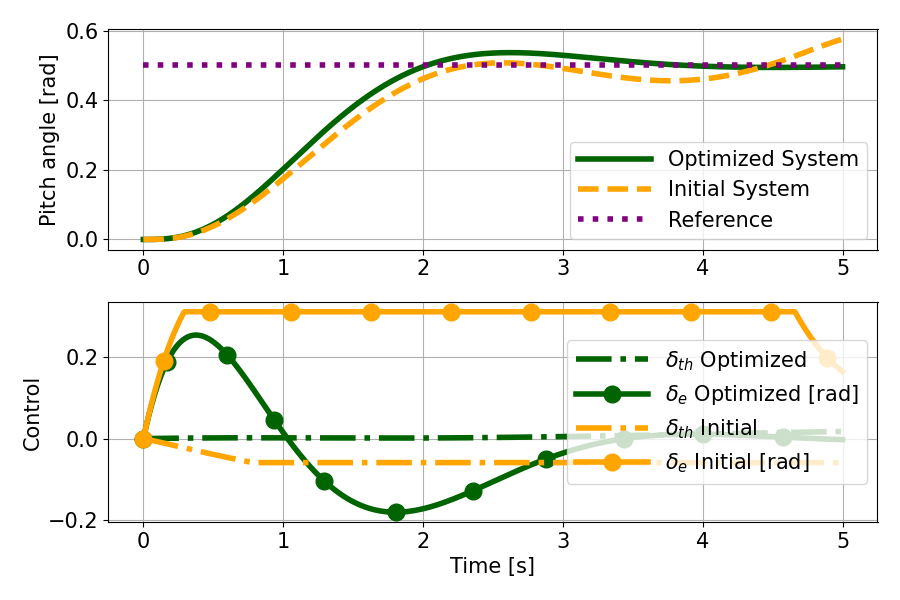}
        \caption{Comparison of pitch reference tracking.}
        \label{fig:integral_pitch_volmax_dircon}
    \end{subfigure}
   \caption{Comparisons of the BWB linear-longitudinal dynamics for initial design and volume-maximized, directionally-constrained~\eqref{eq:volmax_dircon_optimization} design using integral control.}
   \label{fig:integral_volmax_dircon}
\end{figure}

\begin{table}[ht]
\centering
\begin{tabular}{C{2.5cm}|C{2.5cm}|C{2.5cm}|C{2.5cm}|C{2.5cm}}
Design & Velocity Tracking $\Lp[2]$ Error (\% Improvement)& Velocity Tracking Control $\Lp[2]$ Cost (\% Improvement) & Pitch Tracking $\Lp[2]$ Error (\% Improvement) & Pitch Tracking Control $\Lp[2]$ Cost (\% Improvement) \\ \hline
Initial Design                                         & 3.126    & 1.091  & 0.488 & 0.684 \\ \hline
Opt. Design with~\eqref{eq:volume_max_optimization}    & 3.128 (-0.04\%)    & 1.326 (-21.5\%)  & 0.481 (1.6\%) & 0.466 (31.8\%)  \\ \hline
Opt. Design with~\eqref{eq:dirmax_optimization}        & 3.124 (0.1\%)    & 1.072 (1.8\%) & 0.490 (-0.5\%) & 0.703 (-2.9\%)  \\ \hline
Opt. Design with~\eqref{eq:volmax_dircon_optimization} & 3.123 (0.1\%)   & 1.227 (-12.4\%)  & 0.469 (4.0\%) & 0.257 (62.5\%)  \\
\end{tabular}
\caption{Tracking errors and control costs for integral control in velocity and pitch tracking. Percent improvement shows decrease in cost or error with respect to initial design (positive is reduction, negative is increase).}
\label{tab:tracking_error_integral}
\end{table}

The VM design has worse performance than the initial design in velocity tracking, with 0.04\% larger tracking error and 21.5\% larger control cost.
However, its pitch angle tracking has 1.6\% lower error using 31.8\% less control energy.
Given the results of the reachable set optimization, this may seem surprising, as the VM design should be able to achieve a greater airspeed in a shorter amount of time than the initial design.
However, this capability may not be reflected in this tracking problem because the controller is designed to minimize a quadratic cost function that also attempts to stabilize the states that lack a reference.

The DM design achieves a 0.1\% lower tracking error in velocity tracking using 1.8\% less control energy.
However, it has worse performance in pitch tracking, increasing the tracking error by 0.5\% with 2.9\% greater control cost.
Similarly to the results of the optimal LQ controller design, the DM design has worse performance in its pitch tracking when compared with the VM design.
This further reinforces that reducing the lopsidedness of the reachable set when projected onto certain states does not necessarily improve the aircraft's ability to track those states.

Lastly, the VMDC design also achieves a 0.1\% lower tracking error, but has a 12.4\% greater control cost when compared with the initial design.
In pitch angle tracking, this design reduces tracking error by 4.0\% using 62.5\% less control energy.
Across the three optimized designs, only the VMDC design improves on the tracking errors of the initial model for both velocity and pitch angle tracking.
Although it had a higher control cost for velocity tracking, its drastic improvements in pitch tracking align with the results from the optimal LQ tracking, further reinforcing that blending these reachable set metrics can yield large improvements.

\subsection{Nonlinear Reference Tracking}\label{subsec:nonlinear_tracking}
Although the design optimization problems~\eqref{eq:volume_max_optimization},~\eqref{eq:dirmax_optimization}, and~\eqref{eq:volmax_dircon_optimization} leverage reachable sets based on linearized dynamics, the linear analysis still provides performance benefits when evaluating the nonlinear longitudinal dynamics~\eqref{eq:nonlinear_longitudinal_model}.
To compare the controlled performance of the nonlinear dynamics before and after optimizing the BWB, we consider a nonlinear control task that first completes a quasi-steady climb before transitioning to level flight.
The first maneuver is to climb with a flight path angle of $\gamma = 10^\circ$ at $V_0 = 190$ m/s for 20 seconds.
The second maneuver is to fly level at $V_0 = 210$ m/s for 20 seconds.
We design two controllers, using LQR with an integrator on the pitch angle, around the trim points for both of these maneuvers, and we set an initial condition of level flight at $V_0 = 190$ m/s.
The cost matrices are $\bolds{Q} = \diag([1, \ 100,\ 1 ,\ 100,\ 100]) $ and $\bolds{R} = \diag([0.1,\ 1000])$.
We use the sampled aerodynamic data from solving the design optimization problem, as described in Section~\ref{subsec:aero_data_collection}, to capture the drag, lift, and pitching moment the aircraft experiences at each point in time.

Trajectories comparing each optimized model with the initial model are presented in Figures~\ref{fig:volmax_nonlinear},~\ref{fig:dirmax_nonlinear}, and~\ref{fig:volmax_dircon_nonlinear}.
Since each model has a different trim point, these figures plot the differences between the states of the nonlinear systems and their desired trim points.
The spikes in each figure at the 20 second mark signify the switch to the second maneuver and new trim point. 
Similarly to the linear simulations, Table~\ref{tab:tracking_error_nonlinear} shows the $\Lp[2]$-norms of the aircraft deviations from trim, $\bx - \bx_0$ and $\Lp[2]$-norms of the control costs are shown in Table~\ref{tab:tracking_error_nonlinear} and visualized with a bar graph in Figure~\ref{fig:error_nonlinear}.

\begin{figure}[ht]
    \centering
    \includegraphics[width=0.65\linewidth]{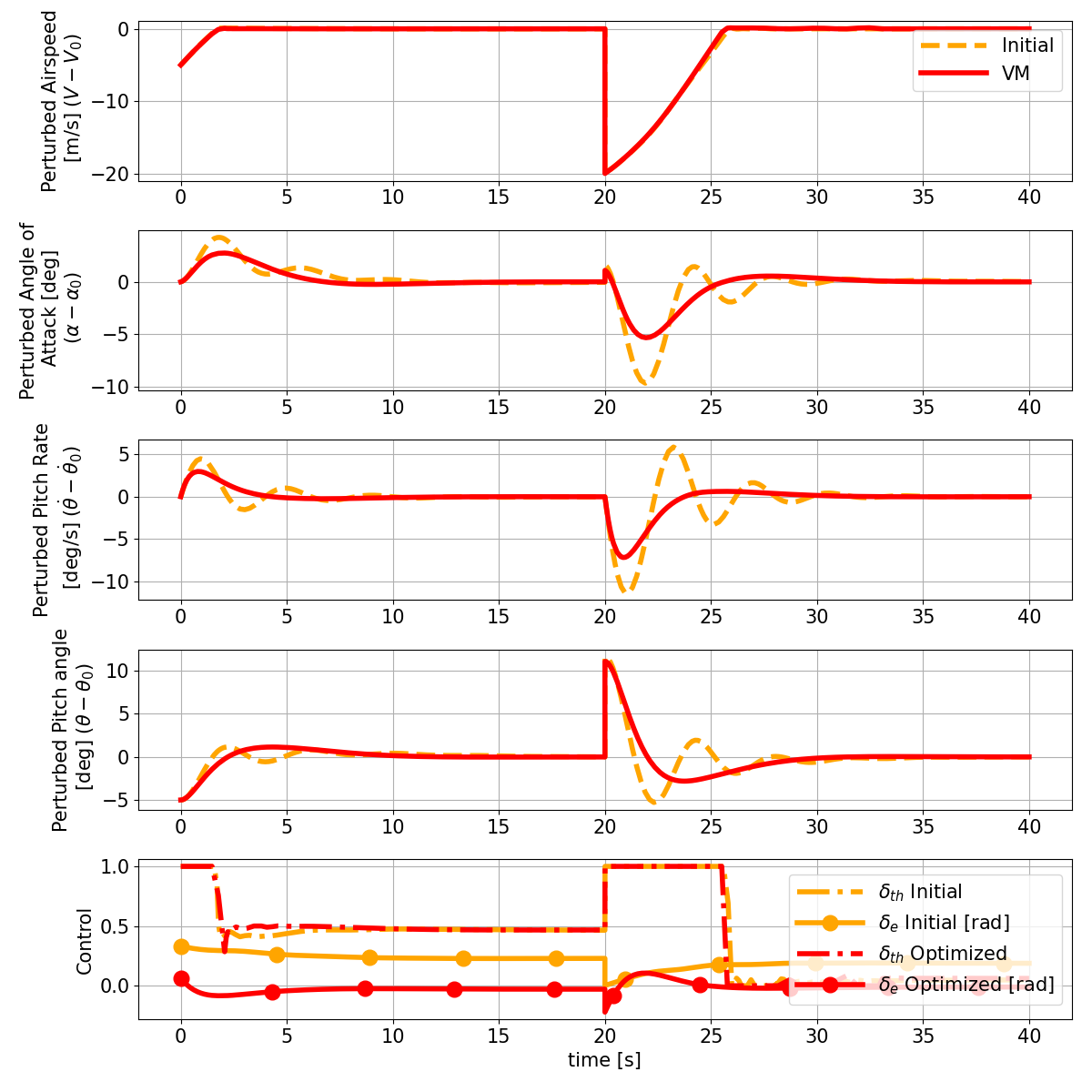}
    \caption{Deviation of nonlinear dynamics from trim point using design from~\eqref{eq:volume_max_optimization}.}
    \label{fig:volmax_nonlinear}
\end{figure}

\begin{figure}[ht]
    \centering
    \includegraphics[width=0.65\linewidth]{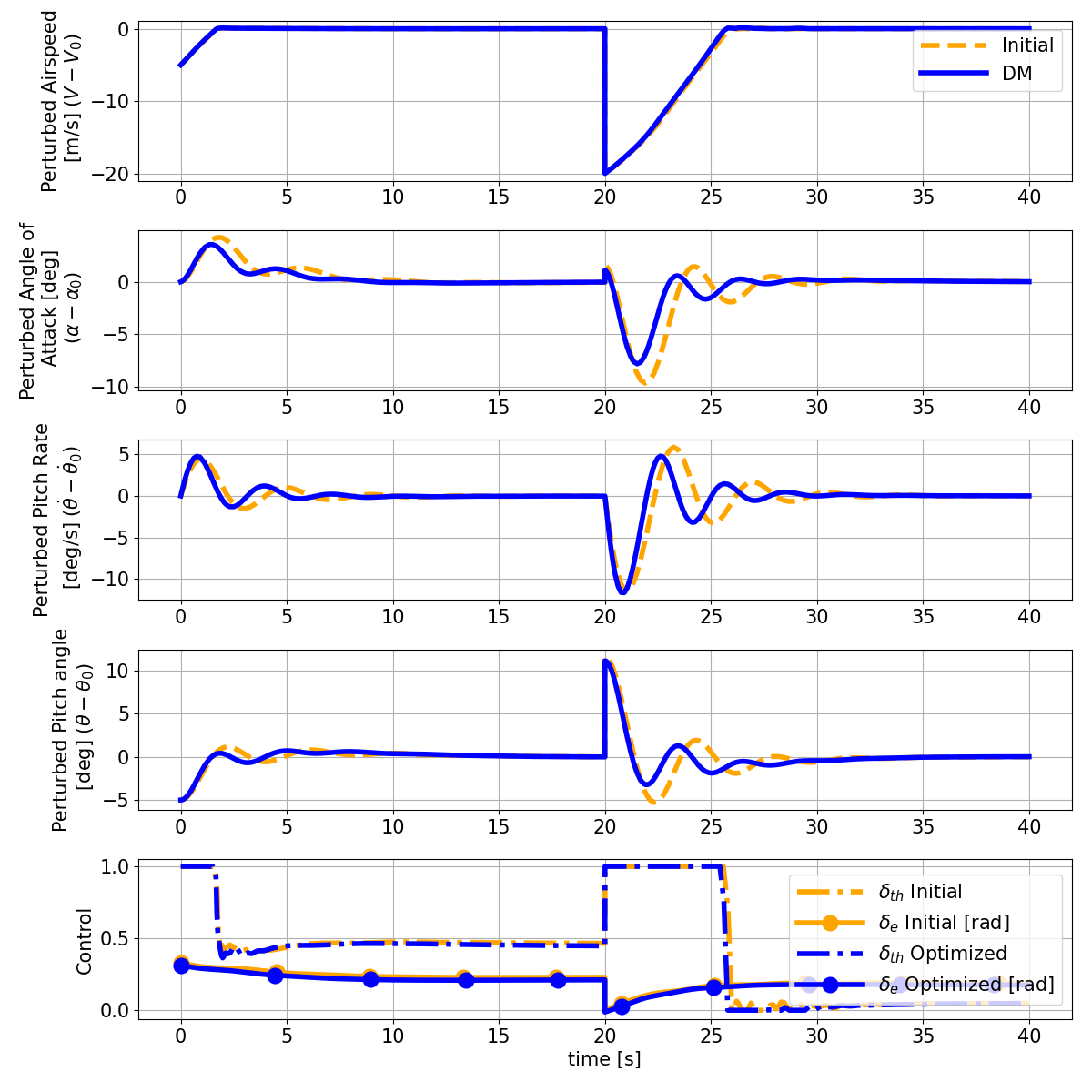}
    \caption{Deviation of nonlinear dynamics from trim point using design from~\eqref{eq:dirmax_optimization}.}
    \label{fig:dirmax_nonlinear}
\end{figure}

\begin{figure}[ht]
    \centering
    \includegraphics[width=0.65\linewidth]{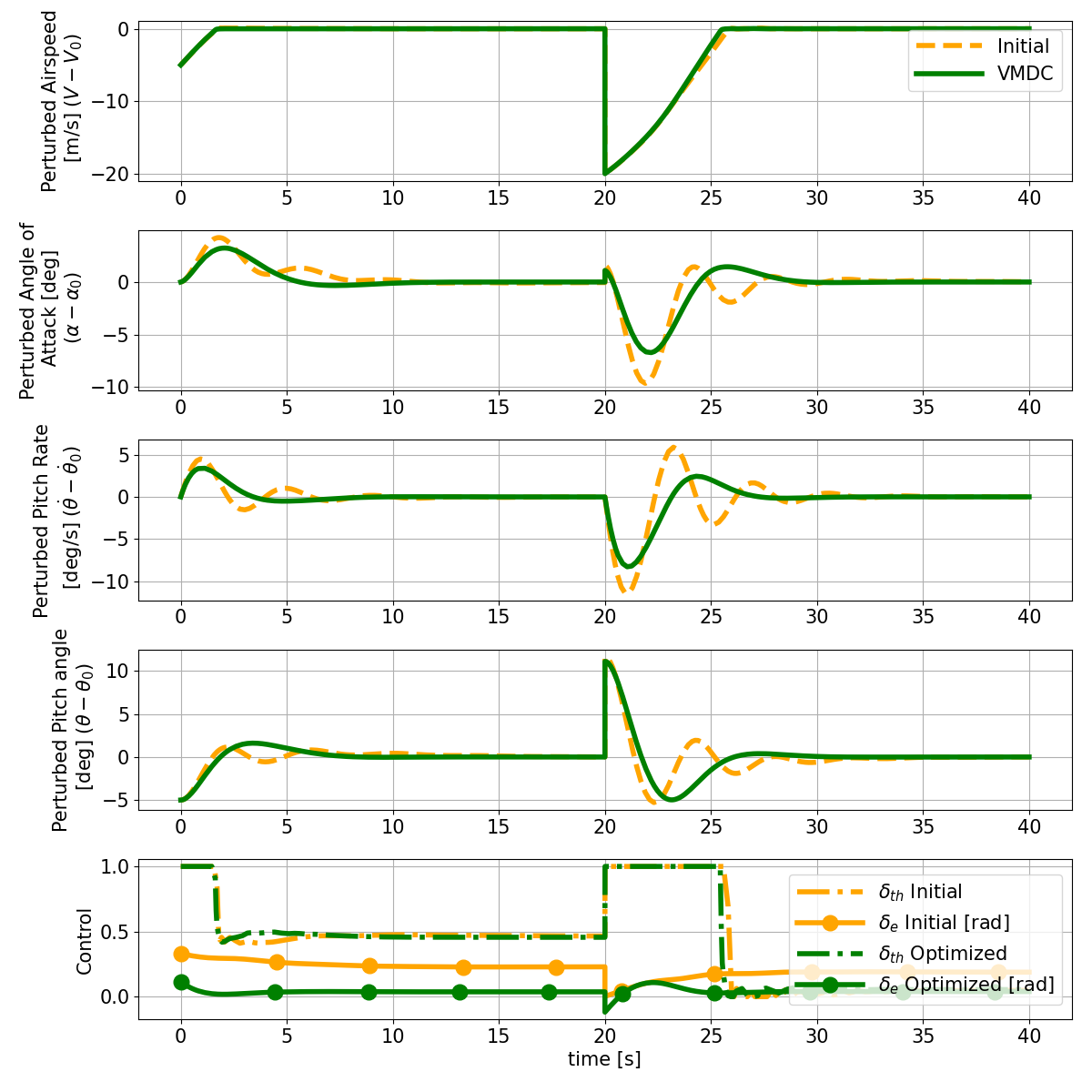}
    \caption{Deviation of nonlinear dynamics from trim point using design from~\eqref{eq:volmax_dircon_optimization}.}
    \label{fig:volmax_dircon_nonlinear}
\end{figure}

In each case, it is clear from the plots of the system responses that the optimized designs stabilize with less overshoot and less oscillation than the initial design.
This is reflected in the errors, as the $\Lp[2]$-norms of all the state errors decrease between the initial and optimized models, with the exception of the pitch angle for the~\ref{eq:volmax_dircon_optimization} design.
In each case, the improvements in stabilization of the aircraft velocity are small because the thrust input only affects the velocity and can stabilize it with minimal effect on the other states.
The most drastic improvements can be seen in the pitch rate and angle of attack signals, where the optimization problems~\eqref{eq:volume_max_optimization},\eqref{eq:dirmax_optimization}, and~\eqref{eq:volmax_dircon_optimization} achieves reductions in error of at least 11.5\% and up to 38\%.
Moreover, the cost of control for each of the optimized designs is slightly lower than for the initial design.
These improvements signify that the three optimization problems are most useful for smoothening angular control of the BWB and improving the controlled performance with less control effort.

\begin{table}[ht]
    \centering
    \begin{tabular}{C{2cm}|C{2cm}|C{2cm}|C{2cm}|C{2cm}|C{2cm}}
    Design &  $\Lp[2]$ Velocity Error (\% improvement)& $\Lp[2]$ Angle of Attack Error (\% improvement) & $\Lp[2]$ Pitch Rate Error (\% improvement) & $\Lp[2]$ Pitch Angle Error (\% improvement) & $\Lp[2]$ Control Cost (\% improvement) \\ \hline
    Initial Design                                         &  30.43 & 0.23 & 0.26 & 0.21 & 3.64  \\ \hline
    Opt. Design with~\eqref{eq:volume_max_optimization}    &  30.30 (0.43\%) & 0.16 (32.88\%) & 0.16 (36.97\%) & 0.21 (1.31\%) & 3.41 (6.30\%)  \\ \hline
    Opt. Design with~\eqref{eq:dirmax_optimization}        &  29.96 (1.58\%)  & 0.17 (25.58\%) & 0.23 (9.80\%) & 0.18 (15.39\%) & 3.55 (2.55\%)   \\ \hline
    Opt. Design with~\eqref{eq:volmax_dircon_optimization} &  30.15 (0.94\%) & 0.19 (19.98\%) & 0.20 (23.37\%) & 0.22 (-6.09\%) & 3.34 (8.14\%)    \\
    \end{tabular}
    \caption{Tracking errors and control costs for control of nonlinear dynamics. Percent improvement shows decrease in cost or error with respect to initial design (positive is reduction, negative
    is increase)}
    \label{tab:tracking_error_nonlinear}
\end{table}

\begin{figure}[ht]
    \centering
    \includegraphics[width=0.65\linewidth]{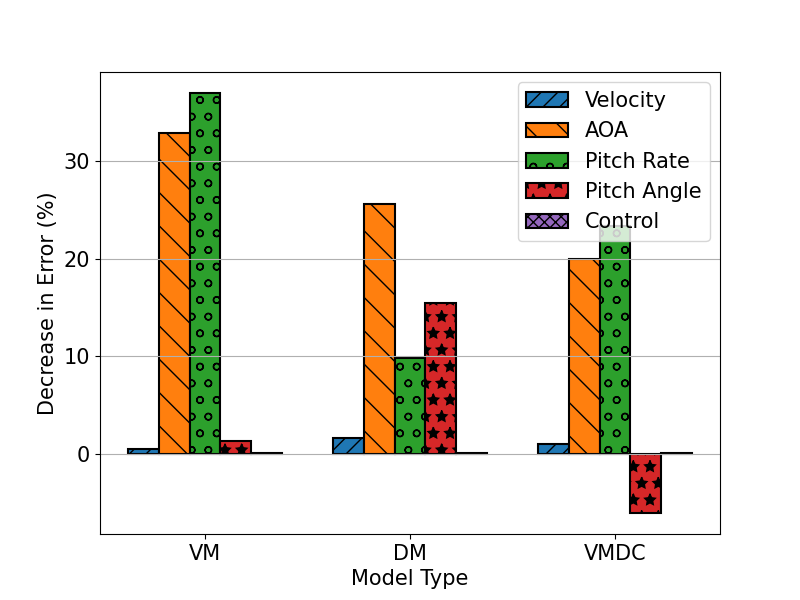}
    \caption{Decrease in $\Lp[2]$ tracking error between initial design and optimized designs~\eqref{eq:volume_max_optimization},\eqref{eq:dirmax_optimization}, and~\eqref{eq:volmax_dircon_optimization} in nonlinear control task.}
    \label{fig:error_nonlinear}
\end{figure}

\section{Conclusions}

In the design of next-generation aircraft, incorporating control concepts and resulting constraints into early stages of the design process could build critical checks for controllability and maneuverability into the first prototypes.
Rather than waiting until the design is finalized before checking its capabilities under feedback, we propose using reachability analysis to augment design optimization with control-oriented metrics.
Since computing reachable sets is notoriously expensive, we restrict to linear analysis to reduce the complexity of the problem while still gaining insight into the aircraft dynamics.
Application of this analysis to the BWB aircraft revealed that analysis of the linearized dynamics can still uncover design changes that improve the aircraft's controlled performance, even in the nonlinear case.

For future work, further investigation of computing exact derivatives for metrics based on reachability could improve the speed and convergence of optimization for larger design problems.
Building on the analysis with the BWB, a more comprehensive design optimization problem with additional design variables could reveal more insight into the design tradeoffs that can be made to improve maneuverability.
Furthermore, since the reachability analysis is based on the linear dynamics, optimizing the aircraft while computing reachable sets about multiple trim points could yield insight into larger swaths of the flight envelope and ensure improved control performance throughout dynamic maneuvers.

\appendix
\section{Appendix: Smoothness of Trim}\label{appendix:trim_implicit}
Here, we show that the trim procedure is locally continuously differentiable.
For a given design vector $[c,w]^\top$, the trim problem is to find the trimmed values $\alpha_0,\deltaeo,\deltato$, and $\theta_0$ such that the $\dot{V} = \dot{\alpha} = \dot{Q} = 0$ and $\theta - \alpha = \gamma$ for a desired flight path angle $\gamma$ in~\eqref{eq:nonlinear_longitudinal_model}.
We consider $c \in [3,7]$ m, $w \in [10,20]$ m, $\alpha \in [-0.0873,0.2618]$ rad, $\deltae \in [-0.5236, 0.5236]$ rad, $\deltat \in [0,1]$, and $\theta \in [-0.5236, 0.5236]$ rad. 
We denote $\calA = [3,\ 7] \times [10,\ 20]$, $\calB = [-0.0873,\ 0.2618] \times [-0.5236,\ 0.5236] \times [0,\ 1] \times [-0.5236,\ 0.5236]$, and $\Omega = \calA \times \calB$.
Then, we define $\bF:\Omega \to \R^4$ as the right hand side of~\eqref{eq:nonlinear_longitudinal_model}.
Letting $\bx = [c,\ w]^\top \in \calA$ and $\by = [\alpha,\ \deltae,\ \deltat,\ \theta]^\top \in \calB$, we aim to show that there exists a continuously differentiable mapping between the design variables and trimmed variables.

\begin{proposition}
    Assume that for fixed values of $\bxi[0] = [c_0,\ w_0]^\top$, there exists $[\alpha_0,\ \deltaeo,\ \deltato,\ \theta_0]^\top$ such that $\bF(\bxi[0],\by_0) = \bolds{0}$.
    Then, $\bPhi$ is continuously differentiable in a neighborhood of $[c_0,\ w_0]^\top$.
\end{proposition}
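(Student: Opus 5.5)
The plan is to apply the implicit function theorem~\cite{rudin_PrinciplesMathematicalAnalysis_1976} to the trim map. Here $\bPhi$ denotes the map $\bx=[c,\ w]^\top \mapsto \by=[\alpha,\ \deltae,\ \deltat,\ \theta]^\top$ that returns the trimmed variables.

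The first step is to fix the trim equations as a square system. The unknowns are four, and the conditions are $\dot V=\dot\alpha=\dot Q=0$ together with $\theta-\alpha-\gamma=0$. I would therefore interpret $\bF:\Omega\to\R^4$ as the map whose first three components are the right-hand sides of $\dot V,\dot\alpha,\dot Q$ in~\eqref{eq:nonlinear_longitudinal_model}, evaluated at $V=V_0$ and $Q=0$. The fourth component is $\theta-\alpha-\gamma$; the $\dot\theta=Q$ row vanishes identically at trim and is discarded.

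The second step is to verify that $\bF$ is $C^1$ on a neighborhood of $(\bxi[0],\by_0)$. The terms $F_{\textrm{th}}\cos\alpha/m$, $g\sin(\theta-\alpha)$, $g\cos(\theta-\alpha)/V_0$ and the affine last component are smooth. Here I assume, as elsewhere in the paper, that the thrust $F_{\textrm{th}}$ is smooth (e.g.\ linear) in $\deltat$. The mass $m$ and inertia $J_y$ are assumed to depend smoothly on $(c,w)$.

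The aerodynamic quantities $D$, $L$, $M$ come from linearly interpolated panel data. They are $C^1$ only away from the grid nodes, which form a measure-zero set, as discussed in Section~\ref{subsec:optimization_results}. I would therefore add the standing assumption that $(\bxi[0],\by_0)$ lies in the interior of an interpolation cell. Alternatively, one can regard $D,L,M$ as the underlying smooth panel-method outputs. In either case $\bF\in C^1$ near the base point.

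The third step, which I expect to be the main obstacle, is showing that the Jacobian $\partial\bF/\partial\by$ at $(\bxi[0],\by_0)$ is invertible. This cannot follow from the stated hypotheses alone, so I would either add it as an explicit assumption or argue it structurally. For the structural argument, I would reduce using the fourth row: substituting $\theta=\alpha+\gamma$ eliminates $\theta$ and leaves a $3\times3$ Jacobian in $(\alpha,\deltae,\deltat)$.

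In that reduced matrix, the $\deltat$ column enters $\dot V$ through $F_{\textrm{th},\deltat}\cos\alpha_0/m$, which is nonzero since $|\alpha_0|$ is small. Its entries in $\dot\alpha$ and $\dot Q$ are small or zero. The $\dot\alpha$ and $\dot Q$ rows in $(\alpha,\deltae)$ form the $2\times2$ block $\begin{bmatrix} Z_\alpha & Z_{\deltae}\\ M_\alpha & M_{\deltae}\end{bmatrix}$, up to scaling. This block is nonsingular precisely when the elevator has pitch authority independent of the angle-of-attack stiffness, i.e.\ $Z_\alpha M_{\deltae}-M_\alpha Z_{\deltae}\neq0$. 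That inequality is the standard trimmability condition and holds for the BWB data. The full determinant is then approximately the product of the thrust derivative and this $2\times2$ determinant, which is nonzero.

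With invertibility in hand, the implicit function theorem gives neighborhoods $U\ni\bxi[0]$ and $W\ni\by_0$ and a unique $C^1$ map $\bPhi:U\to W$ with $\bPhi(\bxi[0])=\by_0$ and $\bF(\bx,\bPhi(\bx))=\bolds 0$. Its derivative is $D\bPhi=-(\partial_\by\bF)^{-1}\partial_\bx\bF$, which completes the proof.

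One remaining detail concerns box constraints. If $\bxi[0]$ or $\by_0$ lies on the boundary of $\calA$ or $\calB$, I would extend $\bF$ in $C^1$ fashion past the box, or restrict $U$ to a relative neighborhood, so that the conclusion is unaffected.
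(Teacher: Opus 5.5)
Your route is the paper's (the implicit function theorem applied to the trim equations), and parts of your setup are cleaner than the paper's. You make explicit that the fourth component of $\bF$ is $\theta-\alpha-\gamma$, which is the only reading consistent with the paper's Jacobian row $(-1,0,0,1)$. You also correctly flag that linearly interpolated data do not make $\bF$ continuously differentiable on all of $\Omega$, as the paper's proof asserts. The kinks lie on entire cell faces, not only at nodes, but your interior-of-a-cell assumption already covers that.

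The gap is in the step you yourself single out. ``Approximately the product of the thrust derivative and the $2\times 2$ determinant'' gives no bound on the neglected term. ``Holds for the BWB data'' is an unverified empirical claim. So nothing you write excludes $\det(\partial\bF/\partial\by)=0$.

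Your elimination of $\theta$ is exact: add the $\theta$-column to the $\alpha$-column, then expand along the last row. You can therefore replace the heuristic by an identity. Expanding the reduced $3\times3$ Jacobian along its $\deltat$-column gives
\[
\det\frac{\partial\bF}{\partial\by}=\frac{F_{\textrm{th}}'}{m}\left[\cos\alpha_0\,(a_2b_3-a_3b_2)+\frac{\sin\alpha_0}{V_0}\,(a_1b_3-a_3b_1)\right],
\]
where $F_i$ is the $i$th component of $\bF$, $a_i=\partial F_i/\partial\alpha+\partial F_i/\partial\theta$, $b_i=\partial F_i/\partial\deltae$, and $F_{\textrm{th}}'=\mathrm{d}F_{\textrm{th}}/\mathrm{d}\deltat$. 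At $\alpha_0=0$ the bracket is exactly the determinant of your $(\dot\alpha,\dot Q)\times(\alpha,\deltae)$ block. Nonvanishing of this expression does not follow from $\bF(\mathbf{x}_0,\by_0)=\mathbf{0}$. What you actually prove is the proposition with this nondegeneracy condition, and the interior-of-cell condition, added as hypotheses. You should state them as such rather than assert them.

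That conditional formulation is the honest one, because the paper's attempt to get invertibility unconditionally does not work either. It infers that rows~1--2 are independent of rows~3--4 from each of those rows having a nonzero third entry. That does not exclude a combination $h\mathbf{v}_1+k\mathbf{v}_2$ with vanishing third entry lying in the span of rows~3--4. Its later computation only shows $h\mathbf{v}_1+k\mathbf{v}_2\neq\mathbf{0}$.

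The paper's argument has further problems:
\begin{itemize}
\item It needs $\sin\alpha_0\neq0$, which fails at the admissible value $\alpha_0=0$.
\item It only considers multipliers $h,k>0$.
\item When it substitutes $h=k\tan\alpha/V_0$ into the fourth-column equation, it flips the sign of the gravity terms. With the correct sign they double instead of cancelling, and the first-column equation becomes $F_{\textrm{th}}=2mg\sin\theta_0$ rather than the claimed contradiction.
\end{itemize}
So keep your argument, but promote nondegeneracy to a hypothesis (or verify it numerically at the trim point) using the exact formula above.
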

\begin{proof}
    By assumption, $\bF(\bxi[0],\bPhi(\bxi[0])) = \bolds{0}$.
    If we can show that $\pder[\bF]{\by}$ is invertible in a neighborhood of $[\bxi[0],\by_0]^\top$, then by the implicit function theorem, there exists a mapping $\bPhi:\tilde{\calA} \to \calB$ that solves the trim problem, where $\tilde{\calA} \subseteq \calA$ is a neighborhood of $\bxi[0]$.
    In~\eqref{eq:nonlinear_longitudinal_model}, the drag, lift, and moment terms are all computed using linear interpolations of sampled aerodynamic data points (Section~\ref{subsec:aero_data_collection}), so $\bF$ is continuously differentiable over $\Omega$.
    Taking the partial derivative with respesct to $\by$, we get:
\begin{equation} \label{eq:partialFpartialy}
\pder[\bF]{\by} = \begin{pmatrix}
    \dfrac{-F_t(\deltat)}{m}\sin(\alpha) - \dfrac{1}{m} \pder[D(\bx,\by)]{\alpha} + g \cos(\theta-\alpha)
    & \dfrac{-1}{m}\pder[D(\bx,\by)]{\deltae} 
    & \dfrac{\cos(\alpha)}{C \cdot m} 
    & \dfrac{-1}{m}\pder[D(\bx,\by)]{\theta} - g\cos(\theta-\alpha) \\
    
    \dfrac{-F_t(\deltat)}{m \cdot V_0}\cos(\alpha) - \dfrac{1}{m \cdot V_0} \pder[L(\bx,\by)]{\alpha} + \dfrac{g}{V_0}\sin(\theta-\alpha) 
    & \dfrac{-1}{m \cdot V_0} \pder[L(\bx,\by)]{\deltae} 
    & \dfrac{-\sin(\alpha)}{C \cdot m \cdot V_0} 
    & \dfrac{-1}{m \cdot V_0}\pder[L(\bx,\by)]{\theta} - \dfrac{g}{V_0}\sin(\theta-\alpha) \\
    
    \pder[M(\bx,\by)]{\alpha} 
    & \pder[M(\bx,\by)]{\deltae} 
    & 0 
    & \pder[M(\bx,\by)]{\theta} \\
    
    -1 & 0 & 0 & 1
\end{pmatrix}
\end{equation}

First note that $\pder[M]{\alpha} = \pder[M]{\theta}$, so clearly, the third and fourth rows of~\eqref{eq:partialFpartialy} are linearly independent.
For any nonzero value of $\alpha$, $\cos(\alpha) \neq 0$ and $\sin(\alpha) \neq 0$, so clearly, the first two rows of~\eqref{eq:partialFpartialy} are linearly independent from the last two rows.
It remains to be seen that the first two rows are linearly independent from each other.
To see this, suppose there exist scalar multipliers $h,k \in \R_{>0}$ such that $h\bolds{v}_1 + k\bolds{v}_2 = \bolds{0}$, where $\bolds{v}_1,\bolds{v}_2$ are the first two rows of~\eqref{eq:partialFpartialy}.
Considering the third column of $\pder[\bF]{\by}$, this implies:
\begin{align} \nonumber
    \dfrac{h \cos(\alpha)}{mC} - \dfrac{k \sin(\alpha)}{m V_0 C} & = 0 \\ \nonumber
    h \cos(\alpha) &= \dfrac{k \sin(\alpha)}{V_0} \\
    h &= \dfrac{k}{V_0}\tan(\alpha). \label{eq:first_identity}
\end{align}
Considering the fourth column of $\pder[\bF]{\by}$, we find:
\begin{align} \nonumber
    \dfrac{-h}{m} \pder[D(\bx,\by)]{\alpha} - hg\cos(\theta-\alpha) -\dfrac{k}{mV_0}\pder[L(\bx,\by)]{\theta} -\dfrac{kg}{V_0}\sin(\theta-\alpha) & = 0 \\ \nonumber
    \dfrac{\tan(\alpha)}{m} \pder[D(\bx,\by)]{\alpha} - g\cos(\theta-\alpha)\tan(\alpha) + \dfrac{1}{m} \pder[L(\bx,\by)]{\alpha} - g\sin(\theta-\alpha) & = 0 \\ \label{eq:second_identity}
    \dfrac{\tan(\alpha)}{m} \pder[D(\bx,\by)]{\alpha} = g\cos(\theta-\alpha)\tan(\alpha) - \dfrac{1}{m} \pder[L(\bx,\by)]{\alpha}  + g\sin(\theta-\alpha)&,
\end{align}
where the second line applies~\eqref{eq:first_identity}.
Lastly, considering the first column of $\pder[\bF]{\by}$, we find:
\begin{align} \nonumber
    h\left( \dfrac{-F_t(\deltat)}{m}\sin(\alpha) - \dfrac{1}{m} \pder[D(\bx,\by)]{\alpha} + g\cos(\theta-\alpha) \right) + k \left( \dfrac{-F_t(\deltat)}{mV_0}\cos(\alpha) - \dfrac{1}{mV_0} \pder[L(\bx,\by)]{\alpha} + \dfrac{g}{V_0}\sin(\theta-\alpha)\right) &= 0  \\ \nonumber
    \dfrac{F_t(\deltat)}{m} \left(-h\sin(\alpha) - \dfrac{k}{V_0}\cos(\alpha) \right) - \dfrac{h}{m} \pder[D(\bx,\by)]{\alpha} + hg\cos(\theta-\alpha) -\dfrac{k}{m V_0}\pder[L(\bx,\by)]{\alpha} + \dfrac{kg}{V_0}\sin(\theta-\alpha) & = 0 \\ \nonumber
    \dfrac{-F_t(\deltat)}{m\cos(\alpha)} - \dfrac{k}{V_0}\dfrac{\tan(\alpha)}{m} \pder[D(\bx,\by)]{\alpha} +\dfrac{k}{V_0} \left( g \cos(\theta-\alpha)\tan(\alpha)  + g \sin(\theta-\alpha)- \dfrac{1}{m}\pder[L(\bx,\by)]{\alpha} \right) & = 0.
\end{align}   
Applying~\eqref{eq:second_identity}, we find:
\begin{align} \nonumber
    0 &= \dfrac{-F_t(\deltat)}{m\cos(\alpha)} - \dfrac{k}{V_0}\left(g\cos(\theta-\alpha)\tan(\alpha) + g\sin(\theta-\alpha) - \dfrac{1}{m} \pder[L(\bx,\by)]{\alpha} \right) \\ \nonumber
    & \quad \quad \quad \qquad  + \dfrac{k}{V_0} \left( g \cos(\theta-\alpha)\tan(\alpha)  + g \sin(\theta-\alpha)- \dfrac{1}{m}\pder[L(\bx,\by)]{\alpha} \right)\\ \nonumber
    0 &= \dfrac{-F_t(\deltat)}{m\cos(\alpha)}
\end{align}
This cannot be true for all values of $\alpha$ and $\deltat$ so there cannot exist any $h,k$ as assumed above.
Thus, the first two rows of $\pder[\bF]{\by}$ are linearly independent, and $\pder[\bF]{\by}$ is invertible.
Then, applying the implicit function theorem, the trim problem is continuously differentiable in a neighborhood of the solution $(\bxi[0],\by_0)$.
\end{proof}

\section*{Funding Sources}
S.N., N.O., and B.K. would like to acknowledge the Multidisciplinary Science and Technology Center of the Aerospace Systems Directorate, Air Force Research Laboratory for funding and supporting this effort through the Collaborative Center for Design and Research of Interdisciplinary Systems program.
J.C.'s work was partially supported by AFOSR Award FA9550-23-1-0740.

\section*{Acknowledgments}
Distribution A: Approved for public release; distribution unlimited. Case no. AFRL-2026-1943.
The authors would like to thank John Hwang, Gregory Reich, David Doman, and Michael Bolender for their contributions to this project through many discussions.

\bibliography{MyRefs}

\end{document}